\documentclass[12 pt,twoside]{amsart}

\usepackage{amsopn}
\usepackage{amssymb}
\usepackage{amscd}

\newtheorem{theorem}{Theorem}[section]
\newtheorem{lemma}[theorem]{Lemma}
\newtheorem{proposition}[theorem]{Proposition}
\newtheorem{corollary}[theorem]{Corollary}

\theoremstyle{definition}
\newtheorem{definition}[theorem]{Definition}

\theoremstyle{remark}
\newtheorem{remark}[theorem]{Remark}

\numberwithin{equation}{section}

\begin{document}

\title[Coarse transitivity and coarsely $J$-class operators]{Coarse topological transitivity on open cones and coarsely $J$-class and $D$-class operators}

\author[A. Manoussos]{Antonios Manoussos}
\address{Fakult\"{a}t f\"{u}r Mathematik, SFB 701, Universit\"{a}t Bielefeld, Postfach 100131, D-33501 Bielefeld, Germany}
\email{amanouss@math.uni-bielefeld.de} \urladdr{http://www.math.uni-bielefeld.de/~amanouss}
\thanks{During this research the author was fully supported by SFB 701 ``Spektrale Strukturen und
Topologische Methoden in der Mathematik" at the University of Bielefeld, Germany.}

\subjclass[2010]{Primary 47A16; Secondary 37B99, 54H20}

\date{}

\keywords{Coarse topological transitivity, coarse hypercyclicity, topological transitivity, hypercyclicity, coarsely $J$-class operator, coarsely $D$-class operator,
$J$-class operator, open cone}

\begin{abstract}
We generalize the concept of coarse hypercyclicity, introduced by Feldman in \cite{Fe1}, to that of coarse topological transitivity on open cones. We show that a bounded
linear operator acting on an infinite dimensional Banach space with a coarsely dense orbit on an open cone is hypercyclic and a coarsely topologically transitive
(mixing) operator on an open cone is topologically transitive (mixing resp.). We also ``localize" these concepts by introducing two new classes of operators called
coarsely $J$-class and coarsely $D$-class operators and we establish some results that may make these classes of operators potentially interesting for further studying.
Namely, we show that if a backward unilateral weighted shift on $l^2(\mathbb{N})$ is coarsely $J$-class (or $D$-class) on an open  cone then it is hypercyclic. Then we
give an example of a bilateral weighted shift on $l^{\infty}(\mathbb{Z})$ which is coarsely $J$-class, hence it is coarsely $D$-class, and not $J$-class. Note that,
concerning the previous result, it is well known that the space $l^{\infty}(\mathbb{Z})$ does not support $J$-class bilateral weighted shifts, see \cite{CosMa2}.
Finally, we show that there exists a non-separable Banach space which supports no coarsely $D$-class operators on  open cones. Some open problems are added.
\end{abstract}

\maketitle

\section{Introduction and basic concepts}

N.S. Feldman in \cite{Fe1} introduced the concept of coarse hypercyclicity under the name ``$d$-\textit{density}". A bounded linear operator $T:X\to X$ acting on a
separable Banach space $X$ is called \textit{coarsely hypercyclic} if it has an orbit within a bounded distance by a positive constant $d$ of every vector (the name
``$d$-\textit{density}" in \cite{Fe1} came from the constant $d$). Feldman showed that such an orbit may not be dense in $X$ although an operator with a coarsely dense
orbit is always hypercyclic, i.e. $T$ has a dense orbit, see \cite{Fe1,BaMa,GEPe}. Coarsely dense orbits appear naturally by looking at perturbations of a dense orbit by
a vector with bounded orbit \cite{Fe1}. In \cite{AbMa} we studied, together with H. Abels, a similar problem for finitely generated abelian subsemigroups of $GL(V)$,
where $V$ is a finite dimensional complex (or real) vector space. We showed that if such a semigroup has a coarsely dense orbit on an open  cone of $V$ then, for the
complex case, this orbit is actually dense in $V$. Recall that an \textit{open cone} on a Banach space $X$ is an open subset $C$ of $X$ such that $\lambda x\in C$ for
every $\lambda
>0$.

Motivated by the previous mentioned concepts and results we introduce and study the concept of coarse topological transitivity on open  cones. This concept can be seen
as a generalization of coarse hypercyclicity introduced by Feldman in \cite{Fe1}. An advantage of coarse topological transitivity comparing to coarse hypercyclicity is
that coarsely topologically transitive operators may exist also on non-separable Banach spaces like topologically transitive operators do. Recall that a bounded linear
operator $T:X\to X$ acting on a Banach space $X$ is called \textit{topologically transitive} if for every non-empty open subsets $U,V$ of $X$ there exists a non-negative
integer $n$ such that $T^nU\cap V\neq\emptyset$.

\begin{definition}\label{def11}
Let $X$ be an infinite dimensional Banach space. A bounded linear operator $T:X\to X$ is called \textit{coarsely topologically transitive} on an open  cone $C\subset X$
(with respect to a positive constant $d$) if for every non-empty open set $U\subset X$ and for every $x\in C$ there exists a non-negative integer $n$ such that $T^nU\cap
B(x,d)\neq\emptyset$, where $B(x,d)$ denotes the open ball centered at $x\in X$ with radius $d$. If $C=X$ we say that $T$ is coarsely topologically transitive.
\end{definition}

A bounded linear operator $T:X\to X$ acting on a Banach space $X$ is called \textit{topologically mixing} if for every non-empty open subsets $U,V$ of $X$ there exists a
positive integer $N$ such that $T^nU\cap V\neq\emptyset$ for every $n\geq N$. One can define a coarse analogue concept in the following way.

\begin{definition}\label{def12}
Let $X$ be an infinite dimensional Banach space. A bounded linear operator $T:X\to X$ is called \textit{coarsely topologically mixing} on an open  cone $C\subset X$
(with respect to a positive constant $d$) if for every non-empty open set $U\subset X$ and for every $x\in C$ there exists a non-negative integer $N$ such that $T^nU\cap
B(x,d)\neq\emptyset$ for every $n\geq N$. If $C=X$, $T$ is called coarsely topologically mixing.
\end{definition}

Our main result is the following theorem. Its proof will be given in several steps in Section 2.

\begin{theorem}\label{main}
Let $T:X\to X$ be a bounded linear operator acting on an infinite dimensional Banach space $X$. Then the following hold.
\begin{enumerate}
\item[(i)] If $T$ has a coarsely dense orbit on an open  cone then $T$ is hypercyclic.

\item[(ii)] If $T$ is coarsely topologically transitive on an open cone then $T$ is topologically transitive
on $X$.

\item[(iii)] If $T$ is coarsely topologically mixing on an open  cone then $T$ is topologically mixing
on $X$.
\end{enumerate}
\end{theorem}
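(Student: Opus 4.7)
The three parts have a common structure: first upgrade each coarse hypothesis to arbitrary tolerance by a scaling argument, then push from the open cone $C$ to all of $X$ by translating inside $C$ and using the linearity of $T$.

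First I would establish the following scaling lemma: if $T$ is coarsely topologically transitive on $C$ with constant $d$, then for every open $U\subset X$, every $x\in C$, and every $\epsilon>0$ there exists $n$ with $T^nU\cap B(x,\epsilon)\neq\emptyset$ (and for all $n\ge N$ in the mixing case). The proof is immediate by rescaling: set $\mu=d/\epsilon$; by scale invariance $\mu x\in C$, and applying the hypothesis to the open set $\mu U$ and the point $\mu x$ yields $n$ with $T^n(\mu U)\cap B(\mu x,d)\neq\emptyset$, which by the linearity of $T$ is the same as $T^nU\cap B(x,\epsilon)\neq\emptyset$.

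Next I would pass from target points in $C$ to arbitrary target points in $X$. Fix $y_0\in C$ and $\rho>0$ with $B(y_0,\rho)\subset C$; by scale invariance $B(\lambda y_0,\lambda\rho)\subset C$ for every $\lambda>0$. For any bounded open $V=B(v_0,r)\subset X$, provided $\lambda\rho>\|v_0\|+r$ the translated ball $B(v_0+\lambda y_0,r)$ sits inside $C$. The scaling lemma then yields $n$ and $u\in U$ with $T^nu$ close to $v_0+\lambda y_0$. To absorb the spurious shift $\lambda y_0$, I would apply the scaling lemma to the open set $U-\lambda W$ (for a small open neighbourhood $W$ of $0$) with cone point $v_0+\lambda y_0$: an element of $U-\lambda W$ has the form $u-\lambda w$, and by linearity $T^n(u-\lambda w)=T^nu-\lambda T^nw$. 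Arranging at the same index $n$ that $T^nw$ is close to $y_0$ then forces $T^n(u-\lambda w)$ to lie inside $V$, which proves topological transitivity on $X$ and so establishes~(ii); part~(iii) is identical with ``for all $n\ge N$'' replacing ``there exists $n$''. Part~(i) then follows because a coarsely dense orbit produces coarse topological transitivity inside the separable closed $T$-invariant subspace generated by the orbit, after which~(ii) and Birkhoff's transitivity theorem supply a hypercyclic vector for $T$.

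The main obstacle is the joint-index selection in the previous paragraph: one must arrange a single $n$ for which both $T^nu\approx v_0+\lambda y_0$ and $T^nw\approx y_0$ hold, with $u\in U$ and $w\in W$. The cleanest route I anticipate is to apply the scaling lemma directly to the open set $U-\lambda W$ with cone point $v_0+\lambda y_0$, and then to use a Baire-category or transversality argument inside $U-\lambda W$ to separate the two contributions and produce the simultaneous approximation. Both the openness of $C$ (providing the ball $B(y_0,\rho)$ that creates the shift) and the scale invariance of $C$ are essential for this translation step.
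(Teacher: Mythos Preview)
Your scaling lemma is correct and is essentially what the paper calls Proposition~1.6: rescaling by $\mu=d/\epsilon$ shows that the coarse hypothesis on $C$ with constant $d$ upgrades to the statement $C\subset D(u,T)$ (respectively $C\subset D^{mix}(u,T)$) for every $u\in X$. For part~(iii) your translation idea actually goes through cleanly, because the mixing condition solves the synchronization problem automatically: from ``for all $n\ge N_1$'' and ``for all $n\ge N_2$'' you simply take $n\ge\max(N_1,N_2)$, and after shrinking $U$ to a half-ball and choosing $\delta=s/(2\lambda)$ for $W=B(0,\delta)$ you keep $u_n-\lambda w_n\in U$. The paper argues (iii) differently, observing that $J^{mix}(0,T)$ is a closed linear subspace containing $C$, hence all of $X$, and then adding a sequence witnessing $0\in J^{mix}(x,T)$; but your route is equally valid here.

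The genuine gap is in~(ii). You identify the obstacle yourself: you need a \emph{single} index $n$ at which $T^nu\approx v_0+\lambda y_0$ and $T^nw\approx y_0$ simultaneously, and bare topological transitivity gives no mechanism to synchronize two independent applications. Your suggestion to apply the scaling lemma to $U-\lambda W$ with target $v_0+\lambda y_0$ only yields $T^n(u-\lambda w)\approx v_0+\lambda y_0$, which is the wrong target, and there is no way to ``separate the contributions'' after the fact: knowing that a linear image of a difference lands near a point tells you nothing about where the individual pieces land. A Baire-category argument does not rescue this either, since the return-time sets for different source--target pairs need not intersect. The paper avoids the synchronization issue altogether: from $C\subset\bigcap_{x}D(x,T)$ it first shows that $P(T)$ has dense range for every nonzero polynomial $P$ (a Hahn--Banach/spectral argument), then uses cyclic vectors inside the interior of $\bigcap_{x}D(x,T)$ together with the closedness and $T$-invariance of $D(x,T)$ to conclude $D(x,T)=X$ for every $x$. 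None of this is visible from your outline.

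There is a second gap in your reduction of~(i) to~(ii). A single coarsely dense orbit $C\subset O(x,T,d)$ does \emph{not} give coarse topological transitivity, not even after restricting to the closed linear span $Y$ of $O(x,T)$: coarse transitivity requires that \emph{every} open set $U$ hit $B(y,d)$ under some iterate, whereas you only control the orbit of the single vector $x$. Density of polynomials in $T$ applied to $x$ does not help, because $p(T)$ need not carry $d$-neighbourhoods to $d$-neighbourhoods. The paper's argument for~(i) is different and more delicate: using infinite-dimensionality one shows that the orbit meets each ball $B(y,3M)\subset C$ infinitely often (Proposition~2.1(iii)), so for any two such balls there are iterates landing in both with the later index exceeding the earlier one; this gives $C\subset J(y,T)$ for every $y\in C$, and then one finishes with the (extended) Bourdon--Feldman theorem applied to a cyclic vector $T^Nx\in C$.
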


In \cite{CosMa1}, together with G. Costakis, we ``localized" the concept of a topologically transitive operator by introducing the $J$-class operators. In a similar way,
in Section 3, we ``localize" the concept of a coarsely topologically transitive operator by introducing two new classes of operators called coarsely $D$-class operators
and coarsely $J$-class operators. Since we do not want to repeat ourselves, we establish some results that may make these classes of operators potentially interesting
for further studying. Firstly we show that if a backward unilateral weighted shift on $l^2(\mathbb{N})$ is coarsely $J$-class (or $D$-class) on an open  cone $C\subset
X$ then it is hypercyclic. Then we give an example of a bilateral weighted shift on $l^{\infty}(\mathbb{Z})$ which is coarsely $J$-class, hence it is coarsely $D$-class,
and not $J$-class. Note that, it is well known that the space $l^{\infty}(\mathbb{Z})$ does not support $J$-class bilateral weighted shifts, see \cite{CosMa2}. Finally,
we show that there exists a non-separable Banach space which supports no coarsely $D$-class operators on  open cones. In Section 4, we give some final remarks and we
raise three open problems.

\textit{Note that from now on when we say ``a linear operator" we always mean a bounded linear operator on a complex Banach space.} The closure of a subset $A$ of a
Banach space $X$ is denoted by $\overline{A}$ and its boundary by $\partial A$. The open ball centered at $x\in X$ with radius $d$ is denoted by $B(x,d)$.

The use of various limit sets, like  in \cite{BaMa,CosMa2,CosMa1,Ma1}, and their coarse variations will help us to make the proofs simpler and clearer for the reader. So
let us introduce some notation.

Let $T:X\to X$ be a linear operator acting on a Banach space $X$ and let $x\in X$. The \textit{orbit} of $x$ under $T$ is the set $O(x,T)=\{ T^nx:\, n\geq 0\}$. The
\textit{coarse orbit} of $x$ under $T$ with respect to a positive constant $d$ is the set

\[
\begin{split}
O(x,T,d)=\{ &y\in X:\,\,\mbox{there exists a non-negative integer}\,\,n\\
            &\mbox{such that}\,\,\| T^{n}x-y\|< d\}=\bigcup_{y\in O(x,T)}B(y,d).
\end{split}
\]

The orbit $O(x,T)$ is \textit{coarsely dense} on an open  cone $C\subset X$ with respect to a positive constant $d$ if $C\subset O(x,T,d)$.

The \textit{extended} (\textit{prolongational}) \textit{limit set} of $x$ under $T$ is the set

\[
\begin{split}
J(x,T)=\{ &y\in X:\,\,\mbox{there exist a strictly increasing sequence of }\\
&\mbox{positive integers}\,\,\{k_{n}\}\,\mbox{and a sequence}\,\,\{x_{n}\}\subset X\\
&\mbox{such that}\, x_{n}\rightarrow x\,\mbox{and}\,\,T^{k_{n}}x_{n}\rightarrow y\}
\end{split}
\]
and describes the asymptotic behavior of the orbits of vectors nearby to $x$. The corresponding coarse set is

\[
\begin{split}
J(x,T,d)=\{ &y\in X:\,\,\mbox{ there exist a strictly increasing sequence of}\\
&\mbox{positive integers}\,\,\{k_{n}\}\,\,\mbox{and a sequence}\,\,\{x_{n}\}\subset X\,\mbox{such}\\
&\mbox{that}\,\,x_{n}\rightarrow x\,\,\mbox{and}\,\, \| T^{k_n}x_n-y\|< d \,\,\mbox{for every}\, n\in\mathbb{N} \}.
\end{split}
\]
Note that in the definition of $J(x,T,d)$ we do not require the convergence of the sequence $\{T^{k_n}x_n\}_{n\in\mathbb{N}}$! Note also that $\bigcup_{y\in
J(x,T)}B(y,d)\subset J(x,T,d)$. The converse inclusion does not hold in general. It may happen that $J(x,T,d)=X$ and $J(x,T)=\emptyset$, see the example in Remark
\ref{rem32}.

The \textit{extended mixing limit set} of $x$ under $T$ is the set

\[
\begin{split}
J^{mix}(x,T)=\{ &y\in X:\,\,\mbox{there exists a sequence}\,\,\{x_{n}\}\subset X\,\mbox{such that}\\
&x_{n}\rightarrow x\,\,\mbox{and}\,\, T^{n}x_{n}\rightarrow y\}
\end{split}
\]

and the corresponding coarse set is

\[
\begin{split}
J^{mix}(x,T,d)=\{ &y\in X:\,\,\mbox{there exists a sequence}\,\,\{x_{n}\}\subset X\,\,\mbox{such that}\\
&x_{n}\rightarrow x\,\,\mbox{and}\,\, \| T^{n}x_n-y\|< d \,\,\mbox{for every}\,\, n\in\mathbb{N} \}.
\end{split}
\]

The \textit{prolongation of the orbit} $O(x,T)$ is the set $D(x,T):=O(x,T)\cup J(x,T)$ and the corresponding coarse set is  $D(x,T,d):=O(x,T,d)\cup J(x,T,d)$. The
\textit{mixing prolongation of the orbit} $O(x,T)$ is the set $D^{mix}(x,T):=O(x,T)\cup J^{mix}(x,T)$ and the corresponding coarse set is $D^{mix}(x,T,d):=O(x,T,d)\cup
J^{mix}(x,T,d)$.

\begin{remark}\label{rem01}
Given a vector $x\in X$ the sets $J(x,T)$, $J^{mix}(x,T)$ and $D(x,T)$ are $T$-invariant and closed in $X$, see \cite{CosMa2,CosMa1}.
\end{remark}

\begin{remark}\label{rem11}
It is plain to check that an operator $T:X\to X$ is topologically transitive if and only if $J(x,T)=D(x,T)=X$ for every $x\in X$ and $T$ is topologically mixing if and
only if $J^{mix}(x,T)=D^{mix}(x,T)=X$ for every $x\in X$. In view of the previous notation, it is also easy to see that an operator $T:X\to X$ is coarsely topologically
transitive on an open cone $C$ with respect to a positive constant $d$ if and only if $C\subset D(x,T,d)$, for every $x\in X$ and it is coarsely hypercyclic on $C$ if
and only if $C\subset O(x,T,d)$. Similarly for a coarsely mixing operator we have that $C\subset D^{mix}(x,T,d)$, for every $x\in X$. Note that if an operator $T$ is
coarsely topologically transitive on an open cone $C$ and since the coarse orbit $O(x,T,d)$ has always non-empty interior we can not deduce that $C\subset J(x,T,d)$ for
every $x\in X$. Hence, coarse $D$-sets come naturally into play in the place of coarse $J$-sets.
\end{remark}

The following proposition relates the various limit sets with their corresponding coarse sets.

\begin{proposition}\label{pr11}
Let $T:X\to X$ be a linear operator acting on an infinite dimensional Banach space $X$. Let $x,y\in X$, $d$ be a positive real number and let $\{ t_k\}\subset\mathbb{R}$
be a strictly increasing sequence of positive real numbers with $t_k\to +\infty$. Then the following hold.

\begin{enumerate}
\item[(i)] If $t_ky\in O(t_kx,T,d)$ for every $k\in\mathbb{N}$,  then $y\in \overline{O(x,T)}$.

\item[(ii)] If $t_ky\in D(t_kx,T,d)$ for every $k\in\mathbb{N}$, then $y\in D(x,T)$.

\item[(iii)] If $t_ky\in J(t_kx,T,d)$ for every $k\in\mathbb{N}$, then $y\in J(x,T)$.

\item[(iv)] If $t_ky\in J^{mix}(t_kx,T,d)$ for every $k\in\mathbb{N}$, then $y\in J^{mix}(x,T)$.

\item[(v)] If $t_ky\in D^{mix}(t_kx,T,d)$ for every $k\in\mathbb{N}$, then $y\in D^{mix}(x,T)$.
\end{enumerate}
\end{proposition}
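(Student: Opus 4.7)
The plan is to exploit the linearity of $T$ to rescale the coarse-set conditions at level $t_k$ into exact asymptotic statements as $t_k\to\infty$. For (i), the hypothesis $t_ky\in O(t_kx,T,d)$ yields integers $n_k\geq 0$ with $\|T^{n_k}(t_kx)-t_ky\|<d$; dividing by $t_k$ gives $\|T^{n_k}x-y\|<d/t_k\to 0$, so $T^{n_k}x\to y$ and $y\in\overline{O(x,T)}$. For (iii), each $k$ furnishes a strictly increasing integer sequence $\{k_n^{(k)}\}_n$ and vectors $x_n^{(k)}\to t_kx$ with $\|T^{k_n^{(k)}}x_n^{(k)}-t_ky\|<d$; setting $z_n^{(k)}:=x_n^{(k)}/t_k$ produces $z_n^{(k)}\to x$ and $\|T^{k_n^{(k)}}z_n^{(k)}-y\|<d/t_k$. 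A diagonal extraction then selects, for each $k$, an index $n(k)$ with $\|z_{n(k)}^{(k)}-x\|<1/k$ and with $m_k:=k_{n(k)}^{(k)}$ strictly larger than all indices chosen at earlier stages; the diagonal pair $(z_{n(k)}^{(k)},m_k)$ witnesses $y\in J(x,T)$.

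For (iv) the same rescaling gives $\|T^nz_n^{(k)}-y\|<d/t_k$ for every $n$, so instead of diagonalizing across $k$ one glues along $n$: choose $N_1<N_2<\cdots$ with $\|z_n^{(k)}-x\|<1/k$ whenever $n\geq N_k$, and set $w_n:=z_n^{(k)}$ on each interval $N_k\leq n<N_{k+1}$. Both $\|w_n-x\|$ and $\|T^nw_n-y\|$ tend to $0$, placing $y\in J^{mix}(x,T)$. Parts (ii) and (v) then follow by a pigeonhole on $D=O\cup J$ and $D^{mix}=O\cup J^{mix}$: for each $k$, $t_ky$ sits in one of the two components, so after passing to a subsequence that component is fixed. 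In (ii), either (i) gives $y\in\overline{O(x,T)}\subset D(x,T)$ (using that $D(x,T)$ is closed by Remark~\ref{rem01}) or (iii) gives $y\in J(x,T)\subset D(x,T)$. In (v) the $J^{mix}$-case follows from (iv); the $O$-case either stabilizes the exponents $n_k$ (placing $y$ in the orbit) or, when $n_k\to\infty$, is handled by a gluing argument in the spirit of (iv) to locate $y$ in $J^{mix}(x,T)$.

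The main technical obstacle is the bookkeeping in the diagonal/gluing steps: in (iii) the new index sequence $m_k$ must be made strictly increasing across $k$; in (iv) the intervals $[N_k,N_{k+1})$ must exhaust $\mathbb{N}$ while the approximation at each boundary is simultaneously controlled. The scaling $z=x'/t_k$ is precisely what reduces each bound ``$<d$'' to ``$<d/t_k\to 0$'', and after that is set up everything reduces to the triangle inequality. The closedness of $J$, $J^{mix}$ and $D$ recorded in Remark~\ref{rem01} is what legitimizes the subsequence dichotomy used in (ii) and (v).
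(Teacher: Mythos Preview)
Your argument for (iv) is exactly the paper's: rescale by $1/t_k$ to get $z_n^{(k)}\to x$ with $\|T^nz_n^{(k)}-y\|<d/t_k$, choose $N_k$ increasing with $\|z_n^{(k)}-x\|<1/k$ for $n\ge N_k$, and splice $w_n:=z_n^{(k)}$ on $[N_k,N_{k+1})$. The paper writes out only (iv) and declares the rest ``similar''; your proofs of (i), (iii), and the $O/J$ dichotomy for (ii) (using that $D(x,T)$ is closed) are correct ways to fill that in.

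The one genuine gap is in (v), in the $O$-subcase with $n_k\to\infty$. You claim a ``gluing argument in the spirit of (iv)'' places $y$ in $J^{mix}(x,T)$, but the gluing in (iv) works precisely because each level $k$ supplies a sequence defined for \emph{every} $n$; in the $O$-case level $k$ yields only the single exponent $n_k$, so there is nothing to splice on the gaps between consecutive $n_k$. All you have actually established is $T^{n_k}x\to y$, i.e.\ $y\in\overline{O(x,T)}$, and Remark~\ref{rem01} does \emph{not} list $D^{mix}(x,T)$ among the closed sets. In fact this step cannot be repaired as written: take $T=e^{i\theta}I$ on any infinite-dimensional $X$ with $\theta/\pi$ irrational and $x\neq 0$; then $J^{mix}(x,T)=\emptyset$ (since $e^{in\theta}$ does not converge), so $D^{mix}(x,T)=O(x,T)$ is countable, yet every $y=e^{i\phi}x$ on the orbit closure satisfies $t_ky\in O(t_kx,T,d)\subset D^{mix}(t_kx,T,d)$ for all $k$. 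The paper does not spell out (v), so this subtlety is not visible there either; note that in the only application (Theorem~\ref{th22}(ii)) one immediately discards the orbit part and works with $J^{mix}$ alone.
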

\begin{proof}
We will only give the proof of item (iv) since the other items follow in a similar way. Assume that $t_ky\in J^{mix}(t_kx,T,d)$ for every $k\in\mathbb{N}$. Then, for
each positive integer $k\in \mathbb{N}$ there exist a positive integer $N_k$ and a sequence $\{ x_n^k\}_{n\in\mathbb{N}}\subset X$ such that $\| x_n^k- x\| <
\frac{1}{k}$ and $\| t_kT^n x_n^k - t_ky\| < d$ for every $n\geq N_k$. The sequence $\{ N_k\}_{k\in\mathbb{N}}$ can be chosen to be strictly increasing. For $N_k\leq n
<N_{k+1}$, $k=1,\ldots$ set $y_n:=x_n^k$  and $s_n:=t_k$. Fix an $\varepsilon >0$, a positive real number $M>0$ and $m\in\mathbb{N}$ such that $\frac{1}{m}
<\varepsilon$,  $t_m>M$ and $\frac{d}{t_m}<\varepsilon$. Now for every $n\geq N_m$ there exists a positive integer $k\geq m$ such that $N_m\leq N_k\leq n <N_{k+1}$. So,
$\| y_n -x\|=\|x_n^k -x\|< \frac{1}{k}\leq \frac{1}{m}<\varepsilon$ and $s_n=t_k\geq t_m>M$ since $\{ t_k\}$ is increasing. On the other hand, $\| s_nT^n y_n - s_ny\|=\|
t_kT^n x_n^k - t_ky\| < d$. Therefore, $\| T^n y_n - y\| < \frac{d}{s_n}=\frac{d}{t_k}\leq \frac{d}{t_m}<\varepsilon$, for every $n\geq N_m$. Since $m$ was fixed then
$y\in J^{mix}(x,T)$.
\end{proof}

\section{Coarsely hypercyclic and coarsely topological transitive operators on open cones}

This section is devoted to the proof of our main result Theorem \ref{main}. Unfortunately the intersection of two open coarsely dense subsets of a Banach space $X$ may
be empty. So, even if $X$ is separable, we can not deduce, by applying a Baire's category type theorem, that a coarsely topological transitive operator is coarsely
hypercyclic, although by Theorem \ref{main} the operator is actually hypercyclic. Thus, one must consider the two cases separately.

The proof of the following proposition follows the line of the proof of Feldman's theorem \cite[Theorem 2.1]{Fe1}.

\begin{proposition} \label{pr21}
Let $T:X\to X$ be a linear operator acting on an infinite dimensional Banach space $X$ with a coarsely dense orbit $O(x,T,d)$ on an open  cone $C\subset X$ with respect
to a positive constant $d$. Then the following hold.
\begin{enumerate}
\item[(i)] The vector $x$ is a cyclic vector for $T$.

\item[(ii)] The open  cone $C$ is contained in $O(\frac{M}{d}\, x,T,M)$ for every $M>0$.

\item[(iii)] Let $y$ be a vector in $C$ such that the ball $B(y,3d)$  is contained in $C$. Then, the set $O(x,T)\cap B(y,3d)$ contains infinitely many points.

\item[(iv)] The operator $T$ has dense range, and so all of its powers.
\end{enumerate}
\end{proposition}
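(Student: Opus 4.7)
The plan hinges on combining coarse density with the invariance of $C$ under positive scaling. For (i), assume for contradiction that $x$ is not cyclic; Hahn-Banach then produces $\phi \in X^* \setminus \{0\}$ with $\phi(T^n x) = 0$ for every $n \ge 0$. For any $y \in C$, coarse density gives $n$ with $\|T^n x - y\| < d$, whence $|\phi(y)| \le d\|\phi\|$. Using $\lambda y \in C$ for all $\lambda > 0$, I get $\lambda |\phi(y)| \le d\|\phi\|$, and letting $\lambda \to \infty$ forces $\phi(y) = 0$ on $C$; since $C$ is a nonempty open set and $\phi$ is continuous linear, $\phi \equiv 0$, a contradiction. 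Item (ii) is the same scaling used in reverse: for $y \in C$ and $M > 0$, note $(d/M)y \in C$, apply coarse density to get $\|T^n x - (d/M)y\| < d$, and multiply by $M/d$ to conclude $\|T^n((M/d)x) - y\| < M$.

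For (iv), I adapt the argument of (i). If $\overline{TX} \neq X$, pick $\phi \in X^* \setminus \{0\}$ vanishing on $TX$, so $\phi(T^n x) = 0$ for all $n \ge 1$ (but possibly $\phi(x) \neq 0$). For $y \in C \setminus \{0\}$ and any $\lambda > (\|x\|+d)/\|y\|$, the coarse-density approximation of $\lambda y$ cannot use the index $n = 0$ (because $\|\lambda y - x\| > d$), so $|\phi(\lambda y)| \le d\|\phi\|$, i.e.\ $|\phi(y)| \le d\|\phi\|/\lambda \to 0$; thus $\phi \equiv 0$ on $C$ and hence on $X$. The extension to all powers $T^n$ uses the standard identity $(T^n)^* = (T^*)^n$ together with the duality: $T$ has dense range iff $T^*$ is injective, and injectivity is preserved under composition.

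The main obstacle is (iii). Suppose for contradiction $O(x,T) \cap B(y, 3d) = \{T^{n_1} x, \ldots, T^{n_k} x\}$ is finite. The triangle inequality upgrades the inclusion $B(y, 2d) \subset C \subset O(x,T,d)$ to $B(y, 2d) \subset \bigcup_{i=1}^k B(T^{n_i} x, d)$: any $z \in B(y,2d) \cap B(T^m x, d)$ forces $T^m x \in B(y, 3d)$, hence $m \in \{n_1, \ldots, n_k\}$. So a ball of radius $2d$ is covered by finitely many balls of radius $d$ --- the delicate point being that this ratio equals exactly $2$, so Riesz's lemma must be applied with a careful choice of parameter. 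I invoke Riesz for the finite-dimensional proper subspace $V := \mathrm{span}\{T^{n_i} x - y : 1 \le i \le k\}$: for a small $\varepsilon > 0$ pick a unit vector $w$ with $d(w, V) > 1 - \varepsilon$, choose $s \in (1/2, 1)$ with $2s(1-\varepsilon) > 1$, and set $z := y + 2ds\,w$. Then $\|z - y\| = 2ds < 2d$ gives $z \in B(y, 2d)$, yet $\|z - T^{n_i} x\| = \|2dsw - (T^{n_i} x - y)\| \ge 2ds\cdot d(w, V) > d$ for every $i$, contradicting the covering.
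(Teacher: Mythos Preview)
Your argument is correct in all four parts. Item (ii) is identical to the paper's. For (i) and (iv) you argue by Hahn--Banach contraposition, whereas the paper works directly: for (i) it shows that $C$ lies in the closed linear span of $O(x,T)$ by approximating each $y\in C$ as a limit of $\tfrac{1}{n}T^{k}x$ (using $ny\in C$), and for (iv) it simply invokes (ii) with $M=\tfrac{1}{n}$ to exhibit every nonzero $y\in C$ as a limit of points $T^{k_n}\bigl(\tfrac{1}{nd}x\bigr)$ in the range of $T$. Your dual approach is equally valid but less constructive; the paper's is a one-line scaling trick in each case.

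For (iii) you set up a finite-covering contradiction and apply Riesz's lemma with a careful choice of $s$ and $\varepsilon$ to defeat the borderline ratio $2$. The paper takes a different and somewhat cleaner route: since $X$ is infinite-dimensional, the sphere $\partial B(y,2d)$ is not compact, so one can choose an infinite sequence $\{y_n\}\subset\partial B(y,2d)$ with pairwise distances at least $2d$; the open balls $B(y_n,d)$ are then pairwise disjoint, all sit inside $B(y,3d)\subset C$, and each must meet $O(x,T)$ by coarse density. Both arguments ultimately rest on Riesz-type non-compactness, but the paper's version avoids the parameter juggling and produces the infinitely many orbit points directly rather than by contradiction.
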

\begin{proof}
(i) Fix a vector  $y\in C$ and a positive integer $n\in\mathbb{N}$. Since $ny\in C\subset O(x,T,d)$, there exists a non-negative integer $k$ such that $\| T^k x-ny \|
<d$. Hence, $\| \frac{1}{n} T^kx-y \| <d/n \to 0$. Therefore, the closed linear span of the orbit $O(x,T)$ contains $C$, so it is the whole space $X$.

(ii) For every $y\in C$ the vector $\frac{d}{M}\,y\in C$. Hence, there exists a non-negative integer $n$ such that $\| T^nx- \frac{d}{M}\,y\| <d$. Thus, $\|
T^n(\frac{M}{d}\,x)- y\| <M$ and the proof is finished.

(iii) Since $X$ is infinite-dimensional, the boundary of the open ball $B(y,2d)$ is not compact. Hence, there exists an infinite sequence $\{y_n\}$ in the boundary of
$B(y,2d)$ with the property whenever $n\neq k$ then $B(y_n,d)\cap B(y_k,d)=\emptyset$. Since $O(x,T)\cap B(y_n,d)\neq\emptyset$ the set $O(x,T)\cap B(y,3d)$ contains
infinitely many points.

(iv) It is enough to show that the open cone $C$ is contained in the closure of the range of $T$. Take a non-zero vector $y\in C$ and fix a positive integer
$n\in\mathbb{N}$. Then, by item (ii), for $M=\frac{1}{n}$, there exists a non-negative integer $k_n$ such that $\| T^{k_n}(\frac{1}{nd}x)- y\| <\frac{1}{n}$. Since $y$
is non-zero we may assume that $k_n>0$ for every $n\in\mathbb{N}$ and the proof is finished.
\end{proof}

\begin{theorem}\label{th21}
Let $T:X\to X$ be a linear operator acting on an infinite dimensional Banach space $X$ with a coarsely dense orbit $O(x,T,d)$ on an open  cone $C\subset X$ with respect
to a positive constant $d$. Then $T$ is a hypercyclic operator.
\end{theorem}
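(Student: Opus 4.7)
The plan is to use the scaling structure of the open cone $C$ to place $C$ inside the extended limit set $J(0,T)$, and then to promote this to the topological transitivity of $T$ on $X$. Combined with the separability of $X$---which is automatic, since $x$ is cyclic by Proposition \ref{pr21}(i), so $X$ is the closed linear span of the countable orbit $O(x,T)$---Birkhoff's transitivity theorem then yields hypercyclicity.

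For the first step, fix a nonzero $y\in C$. Because $C$ is a cone, $ky\in C$ for every $k\in\mathbb{N}$, and the coarse density hypothesis produces $n_k$ with $\|T^{n_k}x-ky\|<d$. Dividing by $k$,
\[
\bigl\|T^{n_k}(x/k)-y\bigr\|<d/k\xrightarrow[k\to\infty]{}0,\qquad x/k\to 0.
\]
The bound $\|T^{n_k}x\|\geq k\|y\|-d$ forces $n_k\to\infty$ along a subsequence (and indeed $\|T\|>1$ must hold), so $y\in J(0,T)$; hence $C\subset J(0,T)$.

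For the second step, by Remark \ref{rem11} it suffices to upgrade $C\subset J(0,T)$ to $J(z,T)=X$ for every $z\in X$. The set $J(0,T)$ is closed and $T$-invariant (Remark \ref{rem01}), and by linearity of $T$ it is closed under multiplication by any complex scalar. Using the \emph{open}-cone property---fixing $c_0\in C$ with $B(c_0,r_0)\subset C$ gives $X=C-C$, since every $z\in X$ can be written $(Mc_0+z)-Mc_0$ with both summands in $C$ for $M$ large enough---I would run the cone-scaling argument jointly on $Mc_0+z$ and $Mc_0$ and subtract the rescaled orbit approximations to produce a sequence of vectors tending to $0$ whose iterates approximate $z$, placing $z\in J(0,T)$. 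An analogous translation argument, writing each target in the form of a difference of cone elements shifted by a large multiple of $c_0$, extends this to $J(z,T)=X$ for every $z$.

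The principal obstacle is precisely this upgrade step, because $J(0,T)$ is not a priori closed under addition: the witness sequences for two members of $J(0,T)$ may live along distinct index subsequences $\{k_n^{(1)}\}\ne\{k_n^{(2)}\}$. The resolution is a diagonal extraction producing approximations of $Mc_0+z$ and $Mc_0$ at \emph{matched} orbit indices, so that the rescaled orbit-differences genuinely converge to $z$. This matching argument is the technical heart of the proof, and it crucially uses that $C$ is \emph{open} rather than merely a cone, since openness is exactly what forces the identity $X=C-C$.
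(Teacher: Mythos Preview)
Your first step---placing $C\subset J(0,T)$ by rescaling---is correct. The gap is in the second step, and it is a real one rather than a detail to be filled in.

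The ``matching'' you describe cannot work. The only approximants furnished by the coarse-density hypothesis are the vectors $x/k$; for a fixed $k$ you get indices $n_k,m_k$ with $T^{n_k}(x/k)$ close to $Mc_0+z$ and $T^{m_k}(x/k)$ close to $Mc_0$. Since the two targets are at distance $\|z\|$ while each approximation is within $d/k$, the indices must differ. The difference $T^{n_k}(x/k)-T^{m_k}(x/k)=T^{m_k}\bigl(T^{n_k-m_k}(x/k)-x/k\bigr)$ is therefore not of the form $T^{p_k}w_k$ with $w_k\to 0$: you have no control over $T^{n_k-m_k}(x/k)$. No diagonal extraction repairs this, because every approximant is drawn from the single rescaled orbit of $x$. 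Moreover, even if you did obtain $J(0,T)=X$, that alone would not yield transitivity: for $T=2I$ one has $J(0,T)=X$ but $J(z,T)=\emptyset$ for every $z\neq 0$, so the ``analogous translation argument'' cannot be merely analogous---and it inherits the same matching obstruction, now with approximants $x/k$ that do not even converge to $z$.

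The paper sidesteps additivity of $J$-sets entirely. Instead of working at $0$, it shows $C\subset J(y,T)$ for \emph{every} $y\in C$: by Proposition~\ref{pr21}(ii),(iii) the rescaled orbit $O(\frac{M}{d}x,T)$ visits any small ball in $C$ infinitely often, so for $y,w\in C$ one finds $n<m$ with $T^n(\frac{M}{d}x)$ near $y$ and $T^m(\frac{M}{d}x)$ near $w$, whence $T^{m-n}$ carries a neighbourhood of $y$ into one of $w$. Since some iterate $T^Nx$ lands in $C$ and is cyclic (Proposition~\ref{pr21}(i),(iv)), the set $J(T^Nx,T)$ has nonempty interior for a cyclic vector, and the Bourdon--Feldman theorem (either \cite{BF} via Lemma~\ref{lem21}, or the version in \cite{CosMa1}) gives hypercyclicity. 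The key idea you are missing is this back-and-forth along a \emph{single} rescaled orbit between two cone points, which replaces the impossible synchronisation of two separate approximating sequences.
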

\begin{proof}
The proof is given in two steps. Firstly we show that $C$ is contained in $J(y,T)$ for every $y\in C$. Then we show that $C$ contains a cyclic vector for $T$ and hence
we may apply an extended Bourdon-Feldman theorem \cite[Theorem 4.1 and Corollary 4.6]{CosMa1} to deduce that $T$ is hypercyclic.

Let $y\in C$. We will show that $C\subset J(y,T)$. Take a point $w\in C$ and let $U\subset C$, $V\subset C$ be open neighborhoods of $y$ and $w$ respectively. Let $M>0$
be such that $B(y,3M)\subset U$ and $B(w,3M)\subset V$. By Proposition \ref{pr21} (ii) $C\subset O(\frac{M}{d}\, x,T,M)$. Thus, by item (iii) of the same proposition,
the sets $O(\frac{M}{d}\, x,T)\cap B(y,3M)$ and $O(\frac{M}{d}\, x,T)\cap B(w,3M)$ contain infinite many points. Hence, there exist a positive integer $n$ such that
$T^n(\frac{M}{d}\, x)\in U$ and a positive integer $m>n$ such that $T^m(\frac{M}{d}\, x)\in V$. So, $T^{m-n}U\cap V\neq\emptyset$, hence $w\in J(y,T)$.

The coarse orbit $O(x,T,d)$ intersects the open  cone $C$ infinitely many times. Hence, there exists a positive integer $N$ such that $T^Nx\in C$. By Proposition
\ref{pr21} (i), $x$ is a cyclic vector for $T$ and from item (iv) of the same proposition the operator $T^N$ has dense range. Hence, by \cite[Lemma 4.2]{CosMa1}, the
vector $T^Nx$ is a cyclic vector for $T$ and, since  $C\subset J(T^Nx,T)$, the extended limit set $J(T^Nx,T)$ has non-empty interior. Then, by the main result of
\cite[Theorem 4.1 and Corollary 4.6]{CosMa1}, $T$ is a hypercyclic operator.
\end{proof}

The proof of next lemma is quite similar to the proof of  \cite[Corollary 2.3 (i) and Corollary 2.5]{Ma1} and is omitted.

\begin{lemma}\label{lem21}
Let $X$ be a completely metrizable space and let $T:X\to X$ be a continuous map acting on $X$. Assume that there is a countable set $A$ and an open ball $B(y,\varepsilon
)$, for some $y\in X$ and $\varepsilon >0$, such that $x\in D(w,T)$ for every $x\in A$ and $w\in B(y,\varepsilon )$. Then, there is a dense $G_{\delta}$ subset $E$ of
$\overline{B(y,\varepsilon )}$ such that $\overline{A}\subset \overline{O(z,T)}$ for every $z\in E$.
\end{lemma}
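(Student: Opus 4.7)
The plan is to run a Baire category argument in the completely metrizable space $\overline{B(y,\varepsilon)}$, which is itself completely metrizable as a closed subspace of $X$. I would enumerate $A=\{x_i\}_{i\in\mathbb{N}}$ and, for each pair $(i,k)\in\mathbb{N}\times\mathbb{N}$, introduce the open set
\[
U_{i,k}:=\bigcup_{n\geq 0} T^{-n}\bigl(B(x_i,1/k)\bigr)\subset X,
\]
consisting of those $w\in X$ whose orbit visits $B(x_i,1/k)$, and then set $V_{i,k}:=U_{i,k}\cap\overline{B(y,\varepsilon)}$. The aim is to show that every $V_{i,k}$ is relatively open and dense in $\overline{B(y,\varepsilon)}$, so that the dense $G_\delta$ set promised by the lemma can be taken to be $E:=\bigcap_{i,k}V_{i,k}$.

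For the density I would proceed by a case split. Given a nonempty relatively open subset $W$ of $\overline{B(y,\varepsilon)}$, I use the density of $B(y,\varepsilon)$ in $\overline{B(y,\varepsilon)}$ to pick a point $w\in W\cap B(y,\varepsilon)$. Since $x_i\in D(w,T)=O(w,T)\cup J(w,T)$, there are two cases. If $x_i\in O(w,T)$, then $T^nw=x_i$ for some $n\geq 0$, so $w\in V_{i,k}\cap W$ directly. If $x_i\in J(w,T)$, the definition of $J(w,T)$ supplies sequences $w_m\to w$ and $n_m\to\infty$ with $T^{n_m}w_m\to x_i$; for $m$ large enough one has $w_m\in B(y,\varepsilon)\cap W$ together with $T^{n_m}w_m\in B(x_i,1/k)$, and therefore $w_m\in V_{i,k}\cap W$. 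Either way $V_{i,k}\cap W\neq\emptyset$.

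Once the density is in hand, Baire yields that $E$ is a dense $G_\delta$ in $\overline{B(y,\varepsilon)}$. For any $z\in E$, membership in every $V_{i,k}$ says precisely that for every $i$ and every $k$ some iterate $T^nz$ lies in $B(x_i,1/k)$, so $x_i\in\overline{O(z,T)}$ for each $i$. Thus $A\subset\overline{O(z,T)}$, and closedness of $\overline{O(z,T)}$ gives $\overline{A}\subset\overline{O(z,T)}$, as required.

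The main obstacle I anticipate lies in the $J$-case of the density step: I must secure that the approximants $w_m$ end up inside $\overline{B(y,\varepsilon)}$ as well as inside $W$, since the sequence produced by the definition of $J(w,T)$ is a priori only close to $w$ in $X$. This forces the initial choice of $w$ to be in the genuinely open ball $B(y,\varepsilon)$, which is exactly why I open the density argument by passing from an arbitrary relatively open $W$ to a point of $W\cap B(y,\varepsilon)$ using the density of $B(y,\varepsilon)$ in its closure.
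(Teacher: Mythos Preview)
Your argument is correct. The paper itself omits the proof of this lemma, pointing instead to \cite[Corollary 2.3(i) and Corollary 2.5]{Ma1}, and the Baire category argument you outline---defining the open sets $U_{i,k}=\bigcup_{n\ge 0}T^{-n}(B(x_i,1/k))$, verifying their density via the case split $D(w,T)=O(w,T)\cup J(w,T)$, and intersecting---is precisely the standard approach behind that cited result; your handling of the boundary issue (first passing to a point $w\in W\cap B(y,\varepsilon)$ so that the approximants $w_m$ produced by the $J$-set land back inside the closed ball) is the right fix and is exactly where care is needed.
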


\begin{remark}\label{rem21}
In the second step of the proof of Theorem \ref{th21} we used an extended Bourdon-Feldman theorem \cite[Theorem 4.1 and Corollary 4.6]{CosMa1}. We may show that $T$ is
hypercyclic using the original Bourdon-Feldman theorem \cite{BF}, i.e. that a somewhere dense orbit is dense in $X$. Indeed, by the first step of the proof of Theorem
\ref{th21}, we get that $C\subset J(y,T)$ for every $y\in C$. By Proposition \ref{pr21} (i), the operator $T$ is cyclic, thus $X$ is separable. So $C$ is also separable,
that is $C$ can be written as the closure of a countable subset of $C$. Therefore, by Lemma \ref{lem21}, there exists a vector in $C$ such that the closure of its orbit
contains $C$. Hence, by \cite{BF}, the operator $T$ is hypercyclic.
\end{remark}

As we mentioned in the introduction, Feldman in \cite{Fe1} showed that a coarsely hypercyclic operator $T:X\to X$ is actually hypercyclic but a coarsely dense orbit may
not be dense in $X$. In the following proposition we give a simple criterion under which a coarsely dense orbit on an open cone is dense in $X$.

\begin{proposition}\label{pr22}
Let $T:X\to X$ be a linear operator acting on an infinite dimensional Banach space $X$ with a coarsely dense orbit $O(x,T,d)$ on an open  cone $C\subset X$ for some
$x\in X$, $d>0$. Then, the orbit $O(x,T)$ is dense in $X$ if and only if $\lambda x\in \overline{O(x,T)}$ for some $| \lambda |<1$.
\end{proposition}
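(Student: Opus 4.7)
The forward implication ($\Rightarrow$) is immediate, since density of $O(x,T)$ yields $\overline{O(x,T)} = X$, which contains $\lambda x$ for every $\lambda$. The plan for the converse is to show $\overline{O(x,T)} \supset C$ and then invoke the Bourdon--Feldman theorem.

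Set $A := \overline{O(x,T)}$ and suppose $\lambda x \in A$ with $|\lambda| < 1$; I focus on $\lambda \neq 0$, the case $\lambda = 0$ being a limiting variant of the same argument. The first observation is that $\lambda A \subseteq A$: since $A$ is closed and $T$-invariant, $\lambda x \in A$ implies $\lambda T^n x = T^n(\lambda x) \in A$ for all $n \geq 0$, and taking closure gives $\lambda A \subseteq A$. By iteration, $\lambda^j T^n x \in A$ for all $j, n \geq 0$.

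The heart of the proof is to show $C \subset A$. Write $\lambda = |\lambda| e^{i\theta}$, fix $y \in C$ with $y \neq 0$ and $\varepsilon > 0$, and select a large integer $j$ satisfying both $|\lambda|^j d < \varepsilon/2$ and $|e^{ij\theta} - 1|\, \|y\| < \varepsilon/2$. The first is achievable because $|\lambda| < 1$; the second can be arranged simultaneously because $\{e^{ij\theta}\}_{j \in \mathbb{N}}$ returns arbitrarily close to $1$ for infinitely many $j$ (trivially if $\theta/(2\pi) \in \mathbb{Q}$ by periodicity, otherwise by Weyl equidistribution). With $c := |\lambda|^{-j} > 0$, the vector $cy$ lies in $C$ by the cone property, so coarse density furnishes $n$ with $\|T^n x - cy\| < d$. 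Since $\lambda^j c = e^{ij\theta}$, the triangle inequality gives
\[
\|\lambda^j T^n x - y\| \leq |\lambda|^j\,\|T^n x - cy\| + |\lambda^j c - 1|\,\|y\| < |\lambda|^j d + |e^{ij\theta} - 1|\,\|y\| < \varepsilon.
\]
As $\lambda^j T^n x \in A$ and $A$ is closed, $y \in A$. Hence $C \subset A$, so $A$ has non-empty interior, and by the Bourdon--Feldman theorem \cite{BF} the orbit $O(x,T)$ is dense in $X$.

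The main obstacle is the complex nature of $\lambda$: the cone $C$ is only invariant under multiplication by positive reals, whereas $\lambda^j$ is generically complex. The remedy is to select $j$ so that $\lambda^j$ is close to its modulus $|\lambda|^j$ (a positive real), a choice enabled by either the periodicity of $e^{ij\theta}$ or Weyl's equidistribution theorem; once this is in place, the cone property of $C$ and Proposition~\ref{pr21}-style scaling convert coarse density at scale $d$ into density at scale $\varepsilon$.
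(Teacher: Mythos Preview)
Your argument is correct and follows the same overall strategy as the paper: establish $\lambda^j T^n x\in\overline{O(x,T)}$ for all $j,n\ge 0$, use the coarse density on the cone together with the contraction $|\lambda|^j\to 0$ to show $C\subset\overline{O(x,T)}$, and finish with the Bourdon--Feldman theorem. The paper cites \cite{Fe1} for the last step while you cite \cite{BF}; the result invoked is the same.

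There is one genuine refinement in your version. The paper simply writes $\frac{1}{\lambda^n}y$ and applies $C\subset O(x,T,d)$ to it, which tacitly treats $\frac{1}{\lambda^n}$ as a positive real scalar; strictly speaking, the cone $C$ is only invariant under positive real scaling, so for a general complex $\lambda$ with $|\lambda|<1$ one cannot assert $\frac{1}{\lambda^n}y\in C$. You noticed this and repaired it by writing $\lambda=|\lambda|e^{i\theta}$, taking $c=|\lambda|^{-j}>0$ so that $cy\in C$, and choosing $j$ along a subsequence with $e^{ij\theta}\to 1$ (via periodicity or equidistribution) to absorb the residual phase. This is a worthwhile correction rather than a different method.

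One small caveat: your remark that the case $\lambda=0$ is ``a limiting variant of the same argument'' is not quite justified, since the scaling mechanism breaks down entirely when $\lambda=0$ (and the paper's proof likewise divides by $\lambda^n$). The statement is most naturally read with $0<|\lambda|<1$, and both proofs implicitly assume this.
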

\begin{proof}
If $\lambda x\in \overline{O(x,T)}$ for some $| \lambda |<1$ then $\lambda^n x\in \overline{O(x,T)}$, for every positive integer $n$. Indeed, since $\lambda x\in
\overline{O(x,T)}$ there exists an increasing sequence of non-negative integers $\{ k_n\}$ such that $T^{k_n}x\to \lambda x$. Note that $T^{k_n} (\lambda x)\to \lambda^2
x$ and since $\lambda x\in \overline{O(x,T)}$ and $\overline{O(x,T)}$ is $T$-invariant then $\lambda^2 x \in \overline{O(x,T)}$. Now, proceeding by induction, we get
that $\lambda^n x\in \overline{O(x,T)}$ for every positive integer $n$.

Take a vector $y\in C$. We will show that $y\in \overline{O(x,T)}$. Since the orbit $O(x,T)$ is coarsely dense on the open cone $C\subset X$, i.e. $C\subset O(x,T,d)$,
then for every $n\in\mathbb{N}$ there exists a non-negative integer $k_n$ such that $\| T^{k_n} x - \frac{1}{\lambda^n} y \| < d$. Thus, $\| T^{k_n} (\lambda^n x) - y \|
< \lambda^n d$ and since $| \lambda |<1$ then $T^{k_n} (\lambda^n x)\to y$. Now, note that $\lambda^n x\in \overline{O(x,T)}$ and since $\overline{O(x,T)}$ is
$T$-invariant then $T^{k_n} (\lambda^n x) \in \overline{O(x,T)}$ for every $n\in\mathbb{N}$, so $y\in \overline{O(x,T)}$. Thus, $C\subset\overline{O(x,T)}$, hence by
\cite{Fe1}, $\overline{O(x,T)}=X$.
\end{proof}

In the following we proceed with the proof of the second and the third part of Theorem \ref{main}. The second part of Theorem \ref{main} will follow as a corollary of a
more general statement, see Theorem \ref{ballth} below.

\begin{proposition} \label{denserange}
Let $T:X\to X$ be a linear operator acting on an infinite dimensional Banach space $X$ such that the set $\bigcap_{x\in X}D(x,T)$ has non-empty interior. Then, the
operator $P(T)$ has dense range for every non-zero polynomial $P$ over the complex numbers.
\end{proposition}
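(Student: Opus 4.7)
The plan is to argue by contradiction. Suppose some nonzero complex polynomial $P$ makes $P(T)$ have non-dense range. Then by Hahn–Banach there is $x^* \in X^* \setminus \{0\}$ vanishing on $\overline{P(T)X}$, equivalently $P(T^*) x^* = 0$. Factoring $P(z) = c \prod_{j=1}^{d} (z - \mu_j)$ over $\mathbb{C}$ and walking through the chain of factors applied to $x^*$, I pick the smallest $j$ for which $z^* := (T^* - \mu_{j+1} I)\cdots(T^* - \mu_d I)\, x^*$ is still nonzero; then $(T^* - \mu_j I) z^* = 0$, so $z^*$ is a genuine eigenvector of $T^*$ with eigenvalue $\mu := \mu_j$. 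The objective of the rest of the argument is to show that the existence of any such eigen-pair $(z^*,\mu)$ is incompatible with the hypothesis that $U := \operatorname{int} \bigcap_{x \in X} D(x,T)$ is non-empty.

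The analysis splits on $|\mu|$. If $|\mu| \leq 1$, I would specialise the hypothesis to $x = 0$, so that $D(0,T) = \{0\} \cup J(0,T)$: for any $y \in J(0,T)$, choose $x_n \to 0$ and $k_n \to \infty$ with $T^{k_n} x_n \to y$, and use the eigenrelation $(T^*)^{k_n} z^* = \mu^{k_n} z^*$ to compute
\[
\langle y, z^* \rangle = \lim_{n \to \infty} \langle T^{k_n} x_n, z^* \rangle = \lim_{n \to \infty} \mu^{k_n} \langle x_n, z^* \rangle = 0,
\]
because $|\mu^{k_n}|$ stays bounded by $1$ while $\langle x_n, z^* \rangle \to 0$. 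Hence $U \subseteq D(0,T) \subseteq \ker z^*$, which is impossible since a non-empty open subset of $X$ cannot lie in a proper closed hyperplane. If instead $|\mu| > 1$, I would pick any $x \in X$ with $\langle x, z^* \rangle \neq 0$; for any $y \in J(x,T)$ with $x_n \to x$ and $T^{k_n} x_n \to y$, the scalar $\langle T^{k_n} x_n, z^* \rangle = \mu^{k_n} \langle x_n, z^* \rangle$ has modulus $|\mu|^{k_n} |\langle x_n, z^* \rangle| \to \infty$, contradicting convergence to $\langle y, z^* \rangle$. Thus $J(x,T) = \emptyset$, so $D(x,T) = O(x,T)$ is countable; but the non-empty open set $U \subseteq D(x,T)$ is uncountable, a contradiction.

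I do not anticipate a serious obstacle: the whole argument is a clean dichotomy driven by the eigenrelation, and both branches reduce to the observation that $U$ is too large (open, hence uncountable and not contained in a proper closed hyperplane) to fit into what a single eigen-pair of $T^*$ permits. The only step requiring a little care is extracting a genuine eigenvector of $T^*$ from $P(T^*) x^* = 0$, which is the standard "shortest annihilating factor" trick and relies on algebraic closedness of $\mathbb{C}$.
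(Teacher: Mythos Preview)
Your argument is correct and follows essentially the same route as the paper's: produce an eigenvector $z^*$ of $T^*$ via Hahn--Banach, then split on $|\mu|$ and use the eigenrelation $\langle T^{k_n}x_n,z^*\rangle=\mu^{k_n}\langle x_n,z^*\rangle$ to contradict the existence of a non-empty open $U\subset\bigcap_x D(x,T)$. One small difference worth noting: in the case $|\mu|\le 1$ you specialise to $x=0$, which lets you treat $|\mu|<1$ and $|\mu|=1$ uniformly (since $\langle x_n,z^*\rangle\to 0$ kills the product regardless), whereas the paper keeps $x$ arbitrary and must handle $|\mu|=1$ separately by observing that $z^*(U)$ would lie in a circle, contradicting the open mapping theorem; your version is slightly cleaner here.
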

\begin{proof}
It suffices to show that the operator $T-\lambda I$ has dense range, where $I:X\to X$ denotes the identity operator and $\lambda\in\mathbb{C}$. Assume the contrary,
hence, by the Hahn-Banach theorem, there exists a non-zero linear functional $x^{*}$ such that $x^{*}((T-\lambda I)(x))=0$ for every $ x\in X$. Therefore,
$x^{*}(T^{n}x)={\lambda}^{n}x^{*}(x)$ for every $x\in X$ and $n>0$. Since we assumed that $\bigcap_{x\in X}D(x,T)$ has non-empty interior there exists a non-empty open
set $U\subset D(x,T)=O(x,T)\cup J(x,T)$ for every $x\in X$. Therefore, $U\subset \bigcap_{x\in X}J(x,T)$ since $X$ has no isolated points.

Assume that $|\lambda|\leq 1$. Take an arbitrary vector $y\in U$ and fix a vector $x\in X$. Since $y\in U\subset \bigcap_{x\in X}J(x,T)\subset J(x,T)$ there exist a
sequence $x_n\to x$ and a strictly increasing sequence $\{ k_{n} \}$ of positive integers such that  $T^{k_{n}} x_{n}\rightarrow y$. So, if $|\lambda|< 1$ then
$x^*(y)=0$ since $x^{*}(T^{k_n}x_n)={\lambda}^{k_n}x^{*}(x_n)$. Thus, $x^*(U)=\{0\}$ which contradicts the open mapping theorem. Now, if $|\lambda|= 1$ then $x^*(y)\in
\overline{\{ {\lambda}^{n}x^{*}(x)\,|\, n>0\}}$. Hence, $x^*(U)\subset \overline{\{ {\lambda}^{n}x^{*}(x)\,|\, n>0\}}$ which again contradicts the open mapping theorem.

Now assume that $|\lambda|>1$. Since $x^{*}$ is non-zero there exists a vector $x\in X$ such that $x^{*}(x)\neq 0$. Take a vector $y\in U\subset \bigcap_{x\in
X}J(x,T)\subset J(x,T)$. Hence, there exist a sequence $x_n\to x$ and a strictly increasing sequence $\{ k_{n} \}$ of positive integers such that  $T^{k_{n}}
x_{n}\rightarrow y$. Since $x^{*}(x_{n})=\frac{1}{{\lambda}^{k_{n}}}\, x^{*}(T^{k_{n}}x_{n})$ then $x^{*}(x)=0$ which is a contradiction.
\end{proof}

\begin{lemma} \label{cyclicJ}
Let $T:X\to X$ be a  linear operator acting on an infinite dimensional Banach space $X$. If $y\in X$ is an interior point of $\bigcap_{x\in X}D(x,T)$ then $P(T)y\in
\bigcap_{x\in X}D(x,T)$ for every non-zero polynomial $P$ over the complex numbers.
\end{lemma}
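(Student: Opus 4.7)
The plan is to execute three steps: upgrade the interior hypothesis from $D$-sets to $J$-sets, propagate $P(T)y$ into $J(P(T)x,T)$ by continuity, and finally eliminate the $P(T)$ on the base point using that $P(T)$ has dense range via Proposition \ref{denserange}. First I would fix $\varepsilon>0$ with $B(y,\varepsilon)\subset\bigcap_{x\in X}D(x,T)$. For any fixed $x\in X$, the set $B(y,\varepsilon)\setminus J(x,T)$ is open (since $J(x,T)$ is closed, by Remark \ref{rem01}) and is contained in the countable set $O(x,T)$; since a non-empty open subset of an infinite-dimensional Banach space is uncountable, this difference must be empty, so $B(y,\varepsilon)\subset J(x,T)$ for every $x\in X$. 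In particular $y\in\bigcap_{x\in X}J(x,T)$.

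Next, if $z\in J(x,T)$, a direct continuity argument gives $P(T)z\in J(P(T)x,T)$: choose $x_n\to x$ and a strictly increasing sequence $\{k_n\}$ with $T^{k_n}x_n\to z$; then $P(T)x_n\to P(T)x$ and $T^{k_n}P(T)x_n=P(T)T^{k_n}x_n\to P(T)z$. Applying this with $z=y$ gives
\[
P(T)y\in\bigcap_{x\in X}J(P(T)x,T)=\bigcap_{w\in P(T)(X)}J(w,T).
\]

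Finally, Proposition \ref{denserange} applies (the interior of $\bigcap_{x\in X} D(x,T)$ is non-empty), so $P(T)(X)$ is dense in $X$. The remaining point is that $\bigcap_{w\in S}J(w,T)=\bigcap_{w\in X}J(w,T)$ whenever $S\subset X$ is dense: given $v$ in the intersection over $S$ and an arbitrary $x^*\in X$, pick $w_n\in S$ with $w_n\to x^*$, and use $v\in J(w_n,T)$ to extract, by a diagonal procedure, points $y_n\in X$ and positive integers $j_n$ with $\|y_n-w_n\|<1/n$, $\|T^{j_n}y_n-v\|<1/n$ and $j_n$ strictly increasing; then $y_n\to x^*$ and $T^{j_n}y_n\to v$, hence $v\in J(x^*,T)$. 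Combined with the inclusion displayed above, this shows $P(T)y\in\bigcap_{x^*\in X}J(x^*,T)\subset\bigcap_{x^*\in X}D(x^*,T)$, which is the claim.

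I expect the only genuine difficulty to be this last density-reduction; the rest is bookkeeping around the definitions. Even there, the diagonal extraction is routine precisely because $J(x,T)$ only demands a sequence converging to $x$ (not equality with $x$), so the three approximations above can be interleaved freely without any obstruction.
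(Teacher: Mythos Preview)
Your proof is correct and follows essentially the same route as the paper's: push $y$ through $P(T)$ to land in $\bigcap_{x}D(P(T)x,T)$ (or $\bigcap_{x}J(P(T)x,T)$), invoke Proposition~\ref{denserange} for dense range, and then run a diagonal argument over a dense set of base points. The only cosmetic differences are that the paper works with $D$-sets throughout (handling the orbit case $y=T^nx$ directly, so no preliminary upgrade from $D$ to $J$ is needed) and outsources your final diagonal step to \cite[Lemma~2.5]{CosMa2}, whereas you make it explicit; your initial reduction $B(y,\varepsilon)\subset J(x,T)$ is a harmless detour.
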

\begin{proof}
Since $y\in \bigcap_{x\in X}D(x,T)$ then, $P(T)y\in \bigcap_{x\in X}D(P(T)x,T)$, for every $P$ non-zero polynomial. By Proposition \ref{denserange} the set
$\{P(T)x\,|\,x\in X\}$ is dense in $X$ and using an argument similar to that in \cite[Lemma 2.5]{CosMa2} we get that $P(T)y\in \bigcap_{x\in X}D(x,T)$.
\end{proof}

\begin{theorem}\label{ballth}
Let $T:X\to X$ be a linear operator acting on an infinite dimensional Banach space $X$ such that the set $\bigcap_{x\in X}D(x,T)$ has non-empty interior. Then $T$ is
topologically transitive on $X$.
\end{theorem}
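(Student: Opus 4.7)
The conclusion, by Remark \ref{rem11}, amounts to $J(x,T)=X$ for every $x\in X$; since each $J(x,T)$ is closed, my aim is to show the closed set $V:=\bigcap_{x\in X}J(x,T)$ equals $X$. First, I transfer the interior hypothesis from $\bigcap_{x} D(x,T)$ to $V$: letting $U$ be a nonempty open subset of $\bigcap_{x} D(x,T)$, the countability-of-orbit argument already used inside the proof of Proposition \ref{denserange} (an infinite-dimensional Banach space has no isolated points, the countable orbit $O(x,T)$ is meager in $U$, and $J(x,T)$ is closed) yields $U\subset J(x,T)$ for every $x$, and hence $U\subset V$.

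I next refine Lemma \ref{cyclicJ} to stay inside $V$ rather than $\bigcap_x D(x,T)$. Fix $y\in V$ and a nonzero polynomial $P$; for any target $x\in X$, Proposition \ref{denserange} produces $x_n$ with $P(T)x_n\to x$, and since $y\in J(x_n,T)$ I obtain sequences $y_{n,m}\to x_n$ and integers $k_{n,m}\to\infty$ (as $m\to\infty$) with $T^{k_{n,m}}y_{n,m}\to y$. Applying $P(T)$, continuous and commuting with $T^{k_{n,m}}$, gives $T^{k_{n,m}}P(T)y_{n,m}\to P(T)y$ and $P(T)y_{n,m}\to P(T)x_n$; a diagonal choice $m=m_n$ then produces $z_n:=P(T)y_{n,m_n}\to x$ and $T^{k_{n,m_n}}z_n\to P(T)y$ with $k_{n,m_n}\to\infty$, so $P(T)y\in J(x,T)$. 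Thus $P(T)V\subset V$ for every nonzero polynomial $P$, which by specialization ($P$ constant, then $P_n\equiv 1/n$ applied to a fixed $y_0\in U$) shows that $V$ is balanced, contains $0$, is $T$-invariant, and that for every $y_0\in U$ the closed cyclic subspace $Z_{y_0}:=\overline{\operatorname{span}\{T^n y_0:n\ge 0\}}$ sits inside $V$.

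To conclude $V=X$ it suffices, by closedness of $V$, to prove $V$ is dense. In the separable case one would pick $y_0\in U$ to be a cyclic vector for $T$ (a residual choice in $U$ under the dense-range conditions supplied by Proposition \ref{denserange}, cf.\ the cyclicity argument in \cite[Lemma 2.5]{CosMa2}) so that $Z_{y_0}=X\subset V$. In general, I combine Lemma \ref{lem21} with the countable sets $A_{y_0}:=\{P(T)y_0:P\in(\mathbb Q+i\mathbb Q)[z]\setminus\{0\}\}$, which by Lemma \ref{cyclicJ} lie in $\bigcap_{x} D(x,T)$ and have $\overline{A_{y_0}}=Z_{y_0}$. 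For any fixed $x\in X$, Lemma \ref{lem21} applied to balls $B(x,\varepsilon)$ furnishes a dense $G_\delta$ of starting points whose orbit-closure contains $Z_{y_0}$; a diagonalization in $\varepsilon$ together with the infinite-dimensional accumulation argument of Proposition \ref{pr21}(iii) forces the iterate index to tend to infinity, placing $Z_{y_0}$ inside $J(x,T)$. Varying $y_0$ over $U$ yields $\bigcup_{y_0\in U}Z_{y_0}\subset V$.

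The principal obstacle is the last step: verifying that $\bigcup_{y_0\in U}Z_{y_0}$ is actually dense in $X$. This is where the hypothesis has to be fully exploited. My plan is to leverage the openness of $U$ together with the density of the range of every $P(T)$ (Proposition \ref{denserange}): given $w\in X$ and $\varepsilon>0$, I look for $y_0\in U$ and a polynomial $P$ with $\|P(T)y_0-w\|<\varepsilon$ by adjusting a point of $U$ into the dense preimage set $\{z:P(T)z\approx w\}$, parallelling the cyclic-vector selection used in the proof of Theorem \ref{th21} via the extended Bourdon--Feldman theorem \cite[Thm.\ 4.1 and Cor.\ 4.6]{CosMa1}. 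Once density of $V$ is secured, $V=X$ follows and topological transitivity is established.
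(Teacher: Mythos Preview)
Your proposal has a genuine gap at precisely the point you yourself flag as ``the principal obstacle'': the density of $\bigcup_{y_0\in U}Z_{y_0}$ in $X$ is never established, and your sketched plan does not work. You propose to find, for given $w\in X$ and $\varepsilon>0$, some $y_0\in U$ and polynomial $P$ with $\|P(T)y_0-w\|<\varepsilon$ by ``adjusting a point of $U$ into the dense preimage set $\{z:P(T)z\approx w\}$''. But that preimage set is merely \emph{open and non-empty} (dense range of $P(T)$ guarantees it is non-empty), not dense: if $P(T)$ happens to be invertible it is just a small ball around $P(T)^{-1}w$, and there is no reason it should meet the fixed open set $U$. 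In fact, the statement ``for every $w$ there exist $y_0\in U$ and $n$ with $T^n y_0$ close to $w$'' is exactly topological transitivity, so your argument is circular at this step. The appeal to the extended Bourdon--Feldman theorem at the end is also inapplicable, since you have no cyclic vector in hand and $X$ need not be separable. (A minor point: the long paragraph invoking Lemma \ref{lem21} and Proposition \ref{pr21}(iii) to place $Z_{y_0}$ inside $J(x,T)$ is redundant---you already obtained $Z_{y_0}\subset V$ from $P(T)V\subset V$ and closedness of $V$.)

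The paper sidesteps this obstacle by reversing the direction of the argument. Rather than trying to show that cyclic subspaces of points in the interior $S$ of $\bigcap_x D(x,T)$ fill up $X$, it fixes the target $y\in X$ first and works backwards. By Lemma \ref{lem21} (applied to a ball around $y$ and the singleton $A=\{s_1\}$ for some $s_1\in S$) one finds $v$ arbitrarily close to $y$ with $s_1\in\overline{O(v,T)}$; hence the closed cyclic subspace $W_v$ meets $S$. A second application of Lemma \ref{lem21} (now to a ball inside $S$ around $s_1$, with $A$ a countable dense subset of $W_v\cap S$) produces $s\in S$ with $W_v\cap S\subset\overline{O(s,T)}\subset W_s$. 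Since $W_v\cap S$ is relatively open in the subspace $W_v$ and $W_s$ is a closed subspace, this forces $W_v\subset W_s$. Finally, $s\in S$ together with Lemma \ref{cyclicJ} gives $W_s\subset D(x,T)$, so $v\in W_v\subset W_s\subset D(x,T)$. The key idea you are missing is this two-step use of Lemma \ref{lem21}, first near $y$ and then inside $S$, which transports the problem back to a cyclic subspace generated by a point of $S$ without ever needing to know that such subspaces are globally dense.
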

\begin{proof}
Recall that $T:X\to X$ is topologically transitive if $D(x,T)=X$, for every $x\in X$. Fix two vectors $x,y\in X$. We will show that $y\in D(x,T)$. By an argument similar
to that in \cite[Lemma 2.5]{CosMa2} it is enough to show that for every open neighborhood $U$ of $y$ there exists a vector $v\in U$ such that $v\in D(x,T)$. Fix an open
neighborhood $U$ of $y$ and consider a vector $s_1$ in the interior $S$ of the set $\bigcap_{x\in X}D(x,T)$. Then, by Lemma \ref{lem21}, there is a vector $v\in U$ such
that $s_1\in \overline{O(v,T)}$. Let $W_v$ be the closed linear span of the orbit $O(v,T)$, then $s_1\in W_v\cap S\neq\emptyset$. Therefore, using again Lemma
\ref{lem21}, there exists a vector $s\in S$ such that $W_v\cap S\subset \overline{O(s,T)}$. So $s_1\in W_v\cap S\subset W_s$, where $W_s$ denotes the closed linear span
of the orbit $O(s,T)$. Note that the set $W_v\cap S$ is a non-empty relatively open subset of $W_v$ and since $W_v\cap S\subset W_s$ we get that $W_v\subset W_s$. The
vector $s\in S$ is an interior point of the set $\bigcap_{x\in X}D(x,T)$. Thus, by Lemma \ref{cyclicJ}, $P(T)s\in D(x,T)$ for every non-zero polynomial $P$ over the
complex numbers, so $W_s\subset D(x,T)$ since the set $D(x,T)$ is closed in $X$. Therefore, $v\in W_v\subset W_s\subset D(x,T)$ and the proof is finished.
\end{proof}

\begin{theorem} \label{th22}
Let $T:X\to X$ be a linear operator acting on an infinite dimensional Banach space $X$. Then the following hold.
\begin{enumerate}
\item[(i)] If $T$ is coarsely topologically transitive on an open  cone $C\subset X$, then $T$ is topologically transitive
on $X$.

\item[(ii)] If $T$ is coarsely topologically mixing on an open  cone $C\subset X$, then $T$ is topologically mixing
on $X$.
\end{enumerate}
\end{theorem}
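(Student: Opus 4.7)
The strategy is to reduce both parts to a common scheme: show that $C$ is contained in $\bigcap_{x \in X} D(x, T)$ (respectively $\bigcap_{x \in X} D^{mix}(x, T)$), so this intersection has non-empty interior, and then invoke Theorem \ref{ballth} (respectively its mixing analogue).

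For (i), I fix $x \in X$ and $y \in C$, and take $t_k = k$. Because $C$ is a cone, $t_k y \in C$, and by Remark \ref{rem11} coarse topological transitivity gives $C \subset D(t_k x, T, d)$, so $t_k y \in D(t_k x, T, d)$ for every $k$. Proposition \ref{pr11}(ii) then yields $y \in D(x, T)$. Since $x$ was arbitrary, $C \subset \bigcap_{x \in X} D(x, T)$, which is open and non-empty, and Theorem \ref{ballth} concludes that $T$ is topologically transitive.

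For (ii), the identical argument using Proposition \ref{pr11}(v) in place of (ii) shows $C \subset \bigcap_{x \in X} D^{mix}(x, T)$. To finish I need a mixing analogue of Theorem \ref{ballth}: if $\bigcap_{x \in X} D^{mix}(x, T)$ has non-empty interior, then $D^{mix}(x, T) = X$ for every $x \in X$, which by Remark \ref{rem11} is exactly topological mixing. I would prove this by repeating the proof of Theorem \ref{ballth} line by line with $D^{mix}$ and $J^{mix}$ in place of $D$ and $J$. The two supporting results to transfer are Proposition \ref{denserange} and Lemma \ref{cyclicJ}; both go through essentially unchanged, because the key identity $x^{*}(T^{n}x_{n}) = \lambda^{n} x^{*}(x_{n})$, followed by the Hahn--Banach and open mapping theorems, still forces $x^{*}$ to vanish on the open interior. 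Moreover, the appeals to Lemma \ref{lem21} remain valid as stated, because $D^{mix}(w, T) \subset D(w, T)$ makes the hypothesis of Lemma \ref{lem21} automatic.

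The main obstacle I anticipate is purely technical: reproducing, for $D^{mix}$, the chain $v \in W_{v} \subset W_{s} \subset D^{mix}(x, T)$ that appears in the proof of Theorem \ref{ballth}. The last inclusion uses that $J^{mix}(x, T)$ is closed, which is verified directly from its definition by a standard diagonal argument on the sequence $(x_n)$, and that $P(T) s \in D^{mix}(x, T)$ for every non-zero polynomial $P$, which is the content of the mixing analogue of Lemma \ref{cyclicJ}. With these two ingredients in hand, the scheme of Theorem \ref{ballth} produces $y \in D^{mix}(x, T)$ for arbitrary $x, y \in X$, completing the proof of (ii).
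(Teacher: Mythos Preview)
Your proof of part~(i) matches the paper exactly: reduce to $C\subset\bigcap_{x}D(x,T)$ via Proposition~\ref{pr11}(ii) and invoke Theorem~\ref{ballth}.

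For part~(ii), your approach is correct in spirit but takes a substantially heavier route than the paper. You propose to rebuild the entire machinery behind Theorem~\ref{ballth} (Proposition~\ref{denserange}, Lemma~\ref{cyclicJ}, the two applications of Lemma~\ref{lem21}, the chain $v\in W_v\subset W_s\subset D^{mix}(x,T)$) with $J^{mix}$ and $D^{mix}$ in place of $J$ and $D$. This can be made to work, but note one wrinkle: you only verify that $J^{mix}(x,T)$ is closed, whereas the final reduction and the inclusion $W_s\subset D^{mix}(x,T)$ in the scheme of Theorem~\ref{ballth} actually use closedness of the larger set $D^{mix}(x,T)$. The fix is to observe that the interior of $D^{mix}(x,T)$ already lies in $J^{mix}(x,T)$ (the orbit is countable and $X$ has no isolated points), so one can run the entire argument inside $J^{mix}$ and conclude $v\in J^{mix}(x,T)$ directly.

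The paper avoids all of this. After obtaining $C\subset D^{mix}(x,T)$ from Proposition~\ref{pr11}(v), it immediately passes to $C\subset J^{mix}(x,T)$ and then exploits two structural facts specific to the mixing limit set: first, $J^{mix}(0,T)$ is a closed \emph{linear subspace} of $X$, so $C\subset J^{mix}(0,T)$ forces $J^{mix}(0,T)=X$; second, since $0\in\overline{C}\subset J^{mix}(x,T)$ for every $x$, one can add the witnessing sequences (here the common index $n$, rather than a subsequence, is crucial) to get $J^{mix}(x,T)\supset J^{mix}(0,T)=X$. This three-line argument replaces the whole polynomial/cyclic-vector apparatus. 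Your approach has the merit of being a uniform template for both parts, but the paper's direct argument for~(ii) is considerably shorter and highlights why the mixing case is in fact easier than the transitive one.
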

\begin{proof}
(i) Since $T$ is coarsely topologically transitive on the open  cone $C\subset X$ then, by Proposition \ref{pr11} (ii), $C\subset D(x,T)$ for every $x\in X$. Now the
proof follows directly from Theorem \ref{ballth}.

(ii) If $T$ is coarsely topologically mixing on an open  cone $C\subset X$ then, by Proposition \ref{pr11} (v), $C\subset D^{mix}(x,T)$ for every $x\in X$. Since $X$ has
no isolated points then $C\subset J^{mix}(x,T)$ for every $x\in X$. Especially, $0\in J^{mix}(x,T)$ for every $x\in X$, since $J^{mix}(x,T)$ is a closed subset of $X$.
It is also plain to see that $J^{mix}(0,T)$ is a closed linear subspace of $X$ and since $C\subset J^{mix}(0,T)$ then $J^{mix}(0,T)=X$. Now fix a vector $x\in X$. We
will show that $J^{mix}(x,T)=X$. Since $0\in J^{mix}(x,T)$ there exists a sequence $x_n\to x$ such that $T^nx_n\to 0$. Take a vector $y\in X=J^{mix}(0,T)$. Thus, there
exists a sequence $y_n\to 0$ such that $T^ny_n\to y$. Hence, $T^n(x_n+y_n)\to y$ while $x_n+y_n\to x$. Therefore, $y\in J^{mix}(x,T)$ and the proof is finished.
\end{proof}

\begin{corollary}\label{cor21}
Let $T:X\to X$ be a linear operator acting on an infinite dimensional Banach space $X$. The following hold.
\begin{enumerate}
\item[(i)] if $T$ is coarsely topologically transitive then it is topologically transitive.

\item[(ii)] if $T$ coarsely topologically mixing then it is topologically mixing.
\end{enumerate}
\end{corollary}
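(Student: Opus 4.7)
The plan is to observe that Corollary \ref{cor21} is simply the special case $C=X$ of Theorem \ref{th22}. By Definitions \ref{def11} and \ref{def12}, the phrase ``$T$ is coarsely topologically transitive'' (respectively ``coarsely topologically mixing''), used without specifying a cone, is defined to mean exactly that $T$ is coarsely topologically transitive (respectively mixing) on the open cone $C=X$. So the only thing to verify is that $X$ itself qualifies as an open cone in the sense of the paper: indeed $X$ is open in $X$, and if $x\in X$ and $\lambda>0$ then $\lambda x\in X$.

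Once this trivial check is made, part (i) of the corollary follows directly by applying Theorem \ref{th22} (i) with $C=X$, which yields topological transitivity on $X$, and part (ii) follows in the same way from Theorem \ref{th22} (ii), yielding topological mixing on $X$. No additional arguments are required, and there is no real obstacle here: the corollary is just a convenient restatement of the main theorem in the case where the open cone is all of $X$.
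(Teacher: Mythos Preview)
Your proposal is correct and matches the paper's primary argument exactly: the paper likewise notes that both items ``follow directly from Theorem~\ref{th22}'' with $C=X$. The only addition in the paper is a remark that one can bypass Theorem~\ref{th22} entirely and derive the corollary straight from Proposition~\ref{pr11} (items (ii) and (v)), since $D(x,T,d)=X$ (resp.\ $D^{mix}(x,T,d)=X$) for all $x$ immediately yields $D(x,T)=X$ (resp.\ $D^{mix}(x,T)=X$) for all $x$.
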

\begin{proof}
Both of the items follow directly from Theorem \ref{th22} but they can also be derived from Proposition \ref{pr11}. Indeed, let $T$ be coarsely topologically transitive
(or coarsely topologically mixing) operator with respect to a positive constant $d$, then $D(x,T,d)=X$ (or $D^{mix}(x,T,d)=X$ respectively), for every $x\in X$. Hence,
by Proposition \ref{pr11} items (ii) and (v), $D(x,T)=X$ (or $D^{mix}(x,T)=X$ respectively), for every $x\in X$.
\end{proof}

\section{Coarsely $J$-class and $D$-class operators}

There is a ``standard" way to ``localize" a concept given via a global property of one of the various concepts of limit sets of a linear operator and this is to ask for
the existence of a non-zero vector with the same limit set property. As an example, in \cite{CosMa1} together with G. Costakis, we ``localized" the concept of
topologically transitive operators by introducing the $J$-class operators. Recall that an operator $T:X\to X$ acting on a Banach space is called $J$-class if there
exists a non-zero vector $x\in X$ such that $J(x,T)=X$. Note that if $T$ is topologically transitive then $D(x,T)=O(x,T)\cup J(x,T)=X$ for every $x\in X$ and since $X$
has no isolated points this is equivalent to $J(x,T)=X$ for every $x\in X$. For the coarse case things differ since a coarse orbit has always non-empty interior. So, we
may define two new classes of operators, the \textit{coarsely $J$-class} and the \textit{coarsely $D$-class} operators by requiring the existence of a non-zero vector
$x\in X$ with $J(x,T,d)=X$ or $D(x,T,d)=X$ respectively for some positive constant $d$. Coarsely $J$-class operators can also be derived by looking at perturbations of a
$J$-vector, i.e. a vector $x\in X$ with $J(x,T)=X$ by a vector with bounded orbit and similarly for $D$-class operators. As we mentioned in the introduction, in this
section we establish some results that may make these classes of operators potentially interesting for further studying. Namely,

\begin{enumerate}
\item[$\bullet$] A backward unilateral weighted shift on $l^2(\mathbb{N})$ is coarsely $J$-class (or $D$-class) on an open  cone then it is hypercyclic.

\item[$\bullet$] There is a bilateral weighted shift on $l^{\infty}(\mathbb{Z})$ which is coarsely $J$-class, hence it is coarsely $D$-class, and not $J$-class. Note
that, it is well known that the space $l^{\infty}(\mathbb{Z})$ does not support $J$-class bilateral weighted shifts, see \cite{CosMa2}.

\item[$\bullet$] There exists a non-separable Banach space which supports no coarsely $D$-class operators on  open cones.
\end{enumerate}

\begin{remark}\label{rem31}
In \cite{AzMu} Azimi and M\"uller constructed a linear operator $T:l_1\to l_1$  so that $J(0,T)$ has non-empty interior and $J(0,T)\neq l_1$. Since when $x\in J(0,T)$
then $\lambda x\in J(0,T)$ for every $\lambda >0$, it is plain to see that in this case the extended limit set $J(0,T)$ contains an open  cone and at the same time
$J(0,T)\neq l_1$. Hence, by \cite[Remark]{AzMu}, there is a linear operator $L:X\to X$ acting on a Banach space and a non-zero vector $x\in X$ such that $J(x,L)$
contains an open  cone and $J(x,L)\neq X$.
\end{remark}

\begin{proposition}\label{pr31}
Every coarsely $J$-class(or $D$-class) backward unilateral weighted shift on $l^2(\mathbb{N})$ on an open  cone $C\subset l^2(\mathbb{N})$ is hypercyclic.
\end{proposition}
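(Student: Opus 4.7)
The plan is to verify Salas's criterion $\sup_n\prod_{k=1}^n|w_k|=\infty$ for hypercyclicity of a unilateral backward weighted shift $T$ on $l^2(\mathbb{N})$, by combining the scaling invariance of the open cone with the explicit coordinatewise formula $(T^nv)_j=w_jw_{j+1}\cdots w_{j+n-1}v_{j+n}$. Throughout, let $\{w_k\}$ be the weights of $T$, and let $x\in l^2(\mathbb{N})$, $d>0$ be the data witnessing coarse $J$-classness (respectively $D$-classness) on the open cone $C$.

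First I would rule out vanishing weights. If $w_{k_0}=0$, then the $k_0$-th coordinate of every iterate $T^mv$ with $m\ge 1$ vanishes, and hence $J(x,T,d)\subset\{y:|y_{k_0}|\le d\}$ while $D(x,T,d)\subset B(x,d)\cup\{y:|y_{k_0}|\le d\}$. Neither set can contain an open cone, because the openness of $C$ lets us pick vectors with non-zero $k_0$-th coordinate and then rescale them so that both the $k_0$-th coordinate and the norm become arbitrarily large. Hence every weight is non-zero.

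The heart of the argument comes next. Fix any $y_0\in C$ and an index $j$ with $y_{0,j}\neq 0$. Because $C$ is a cone, $ty_0\in C\subset J(x,T,d)$ (resp.\ $\subset D(x,T,d)$) for every $t>0$. Unpacking the definitions yields, for each such $t$, a vector $v_t\in l^2(\mathbb{N})$ with $\|v_t-x\|\le 1$ and an integer $k_t\ge 1$ satisfying $\|T^{k_t}v_t-ty_0\|<d$; in the $D$-class case one must also admit the orbit alternative $v_t=x$, but the elementary comparison $t\|y_0\|-d\le\|T^{k_t}v_t\|\le\|T\|^{k_t}\|v_t\|$ forbids $k_t$ from staying bounded as $t\to\infty$. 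Reading the $j$-th coordinate of $\|T^{k_t}v_t-ty_0\|<d$ together with the shift formula gives
\[
|w_jw_{j+1}\cdots w_{j+k_t-1}|\cdot|v_{t,\,j+k_t}|\ \ge\ t|y_{0,j}|-d.
\]
Since $|v_{t,\,j+k_t}|\le\|v_t\|\le\|x\|+1$, letting $t\to\infty$ forces $|w_jw_{j+1}\cdots w_{j+k_t-1}|\to\infty$; a posteriori $k_t$ must be unbounded, so $\sup_m|w_jw_{j+1}\cdots w_{j+m-1}|=\infty$. Multiplying by the fixed positive constant $|w_1\cdots w_{j-1}|$ then yields $\sup_n\prod_{k=1}^n|w_k|=\infty$, and Salas's theorem identifies this condition with the hypercyclicity of $T$.

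The main potential obstacle is the uniform handling of the $J$-class and $D$-class cases: in the $D$-class version the orbit alternative has to be prevented from using only boundedly many powers of $T$. This is dispatched by the short norm comparison above, after which both hypotheses feed into the same coordinatewise inequality, and Salas's criterion appears as a direct consequence.
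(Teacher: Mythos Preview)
Your proof is correct and takes a genuinely different route from the paper's. The paper argues at the level of limit sets: from $Ny\in C\subset D(x,T,d)$ for every positive integer $N$ it extracts, after dividing by $N$, approximations $\|T^{k_n}(x_n/N)-y\|<d/N$ with $x_n/N\to 0$, and a diagonal argument then gives $C\subset J(0,T)$; hypercyclicity follows by invoking a result from \cite{CosMa1} (their Proposition~5.13) to the effect that a unilateral weighted shift whose $J(0,T)$ has non-empty interior is hypercyclic. You instead work coordinatewise with the explicit shift formula and squeeze out Salas's condition $\sup_n\prod_{k=1}^n|w_k|=\infty$ directly. Both arguments exploit the cone's scaling invariance in the same essential way; yours is concretely computational and rests only on Salas's classical characterisation, while the paper's stays coordinate-free and would transfer verbatim to any setting where an analogous $J(0,T)$-criterion is available.
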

\begin{proof}
Let $T:l^2(\mathbb{N})\to l^2(\mathbb{N})$ be a backward unilateral weighted shift which is coarsely $J$-class (or $D$-class) on an open cone $C\subset l^2(\mathbb{N})$
with respect to a positive constant $d$. That is there exists a non-zero vector $x\in l^2(\mathbb{N})$ such that $C\subset J(x,T,d)$ (or $C\subset D(x,T,d)$ resp.). Fix
a point $y\in C$ and a positive integer $N$. Then, there exist a strictly increasing (or increasing resp.) sequence of positive (or non-negative resp.) integers
$\{k_{n}\}$ and a sequence $\{x_n\}$ such that $x_{n}\rightarrow x$ and $\| T^{k_n}x_n-Ny\|_2< d$ for every $n\in\mathbb{N}$. Hence,  $\| T^{k_n}\frac{x_n}{N}-y\|_2<
\frac{d}{N}$. Using a diagonal procedure we can find a strictly increasing (or increasing resp.) sequence of positive integers $m_n$ and a sequence $y_n\to 0$ such that
$T^{m_n}y_n\to y$. Therefore, $C\subset J(0,T)$ (or $C\subset D(0,T)=J(0,T)\cup O(0,T)=J(0,T)$  resp.), thus by \cite[Proposition 5.13]{CosMa1}, $T$ is hypercyclic. Note
that we have not used that the vector $x$ is non-zero.
\end{proof}

\begin{proposition}\label{pr32}
There exists a backward bilateral weighted shift on $l^{\infty}(\mathbb{Z})$ which is coarsely $J$-class and not $J$-class.
\end{proposition}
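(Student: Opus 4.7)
The plan is to prove this by explicit construction: I will exhibit a bilateral weighted backward shift $T$ on $l^\infty(\mathbb{Z})$, a non-zero vector $x\in l^\infty(\mathbb{Z})$, and a positive constant $d$ for which $J(x,T,d)=l^\infty(\mathbb{Z})$, and then invoke the cited result from \cite{CosMa2} (that $l^\infty(\mathbb{Z})$ supports no $J$-class bilateral weighted shifts) to conclude immediately that $T$ is not $J$-class. So the whole burden of the proof lies on the coarse $J$-class side.

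First I would specify the weight sequence $\{w_n\}_{n\in\mathbb{Z}}$ so that $T$ is bounded on $l^\infty(\mathbb{Z})$ (weights uniformly bounded) and so that the products $P_m^{(k)}=\prod_{j=m+1}^{m+k} w_j$ exhibit the tension required: on the one hand the products must grow fast enough along some sequence $k_n\to\infty$ to allow small perturbations of $x$ to be blown up into arbitrary bounded patterns after iteration, and on the other hand the shape of the orbit $\{T^k x\}_{k\ge 0}$ must be compatible with a single fixed non-zero $x\in l^\infty(\mathbb{Z})$. This double requirement is the heart of the construction, because a straightforward attempt (weights with $\inf_m P_m^{(k)}\to\infty$, plus a non-zero bounded orbit as a perturbation of a $J$-vector) cannot succeed on $l^\infty(\mathbb{Z})$: uniform amplification forces every nonzero orbit to blow up.

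With the weights and $x$ fixed, I would reduce $y\in J(x,T,d)$ to a coordinate-wise inequality. Writing $x_n=x+z_n$ with $\|z_n\|_\infty\le\varepsilon_n$ and noting that $(T^{k_n}(x+z_n))_m=P_m^{(k_n)}(x_{m+k_n}+(z_n)_{m+k_n})$, the existence of admissible $(z_n)_{m+k_n}$ amounts to
\[
|y_m - P_m^{(k_n)}x_{m+k_n}|\;\le\;d+\varepsilon_n P_m^{(k_n)}\qquad\text{for every } m\in\mathbb{Z},
\]
together with $\varepsilon_n\to 0$. The proof of $J(x,T,d)=l^\infty(\mathbb{Z})$ would then consist in showing that for every $y\in l^\infty(\mathbb{Z})$ there exist $k_n\to\infty$ and $\varepsilon_n\to 0$ for which this inequality holds. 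The coarse slack $d$ is used decisively on those positions $m$ where $P_m^{(k_n)}$ is small: there the inequality collapses to $|y_m-P_m^{(k_n)}x_{m+k_n}|\le d$, which must be arranged by a careful choice of $x$ and of the subsequence $k_n$; on the positions where $P_m^{(k_n)}$ is large, the term $\varepsilon_n P_m^{(k_n)}$ provides ample room.

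The main obstacle, and where I expect almost all of the work, is verifying the displayed inequality uniformly in $m$ for an \emph{arbitrary} $y\in l^\infty(\mathbb{Z})$. Because $l^\infty(\mathbb{Z})$ is non-separable, the countable family $\{T^k x\}_{k\ge 0}$ alone cannot be $d$-dense; the perturbations $z_n$ are what make the argument possible, and the weights must be engineered so that the product factors $P_m^{(k_n)}$ furnish enough "amplification room'' in each coordinate simultaneously while $\|z_n\|_\infty$ tends to zero. Once this is done, the second claim of the proposition is free: $T$ cannot be $J$-class because, as recalled from \cite{CosMa2}, no bilateral weighted shift on $l^\infty(\mathbb{Z})$ is $J$-class. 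In particular this shows that the coarse tolerance $d>0$ genuinely enlarges the class of operators: it is precisely the slack $d$ that makes the construction possible, and its removal (i.e., passing to $J(x,T)$) obstructs it.
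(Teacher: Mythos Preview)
Your outline misses the simple argument the paper uses, and your dismissal of the ``perturbation'' approach rests on a false premise. You write that perturbing a $J$-vector by a non-zero bounded-orbit vector ``cannot succeed on $l^\infty(\mathbb{Z})$: uniform amplification forces every nonzero orbit to blow up.'' But uniform amplification is \emph{not} required for $J(0,T)=l^\infty(\mathbb{Z})$. In \cite[Remark~3.5]{CosMa2}---already cited in the paper---it is shown that the shift with weights $\alpha_n=2$ for $n\ge 1$ and $\alpha_n=1$ for $n\le 0$ satisfies $J(0,T)=l^\infty(\mathbb{Z})$ and is not $J$-class. For this shift the vector $y=e_0$ has $\|T^n y\|_\infty=1$ for every $n$, because the single non-zero coordinate slides leftward into the region where all weights equal $1$; so a non-zero bounded orbit does exist.

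With these two ingredients the paper's proof is three lines: given any target $w\in l^\infty(\mathbb{Z})$, pick $x_n\to 0$ and $k_n\uparrow\infty$ with $T^{k_n}x_n\to w$ (possible since $w\in J(0,T)$); then $x_n+y\to y$ and
\[
\|T^{k_n}(x_n+y)-w\|_\infty\le\|T^{k_n}x_n-w\|_\infty+\|T^{k_n}y\|_\infty\to 0+1<2,
\]
so $J(y,T,2)=l^\infty(\mathbb{Z})$. The non-$J$-class claim is the same citation. By contrast, your coordinate-wise inequality is set up but never resolved: you specify neither the weight sequence nor the vector $x$, and you correctly identify the uniform-in-$m$ estimate as ``the main obstacle'' without indicating how to overcome it. So what you have is a plan with the decisive step left open, built around an obstacle that does not exist.
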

\begin{proof}
Let $T:l^{\infty}(\mathbb{Z})\to l^{\infty}(\mathbb{Z})$ be the backward bilateral weighted shift  with weight sequence $(\alpha_n )_{n\in\mathbb{Z}}$, $\alpha_n=2$
for $n\geq 1$ and $\alpha_n=1$ for $n\leq 0$. As we showed, with G. Costakis, in \cite[Remark 3.5]{CosMa2},  $J(0,T)=l^{\infty}(\mathbb{Z})$ and $T$ is not a
$J$-class operator. Now, consider the vector $y=\{y(n)\}_{n\in\mathbb{Z}} \in l^{\infty}(\mathbb{Z})$ with $y(n)=0$ for every $n\neq 0$ and $y(0)=1$. Obviously $\|
T^ny\|_{\infty}=1$ for every $n\in\mathbb{N}$. We will show that $J(y,T,2)=l^{\infty}(\mathbb{Z})$. Take a vector $x\in l^{\infty}(\mathbb{Z})$. Since
$J(0,T)=l^{\infty}(\mathbb{Z})$ there exist a strictly increasing sequence of positive integers $\{k_{n}\}$ and a sequence $\{x_n\}$ such that $x_{n}\rightarrow x$
and $T^{k_n}x_n\to x$. It is plain to see that $\| T^{k_n}(x_n+y) -x\|_{\infty}<2$, for $n\in\mathbb{N}$ big enough, so  $J(y,T,2)=l^{\infty}(\mathbb{Z})$.
\end{proof}

\begin{remark}\label{rem32}
Note that, in the above example, $J(y,T,2)=l^{\infty}(\mathbb{Z})$ but $J(y,T)=\emptyset$. Indeed, let us assume the contrary, that is there exists a vector
$w=\{w(n)\}_{n\in\mathbb{Z}}\in J(y,T)$. Hence, there exist a sequence $y_n\to y$ and a strictly increasing sequence of positive integers $\{k_{n}\}$ such that
$T^{k_n}y_n\to w$.  Thus, there exists a positive integer $n_0$ such that $\| y_n -y\|_{\infty} <\frac{1}{4}$ and  $\| T^{k_n}y_n - w\|_{\infty} <\frac{1}{4}$ for every
$n\geq n_0$. For the economy of the proof, let us denote the vector $y_n=\{y(n,k)\}_{k\in\mathbb{Z}}$. Now, since the 0-th coordinate of $y$ is 1 and all the rest
coordinates are 0, then $|w(-k_{n_1}) -y(n_1,0)| < \frac{1}{4}$ and $|y(n_1,0)-1| < \frac{1}{4}$. Therefore, $|w(-k_{n_1})-1| < \frac{1}{2}$ (1). On the other hand $\|
T^{k_{n_0}}y_{n_0} - w\|_{\infty} <\frac{1}{4}$, hence $|w(-k_{n_1})-y(n_0,k_{n_0}-k_{n_1})| < \frac{1}{4}$. Note that $|y(n_0,k_{n_0}-k_{n_1})| < \frac{1}{4}$. So,
$|w(-k_{n_1})| < \frac{1}{2}$ which if it is combined with (1) leads to a contradiction.
\end{remark}

\begin{theorem} \label{existmain}
There exists a non-separable Banach $X_A$ space which supports no coarsely $D$-class operators on  open cones.
\end{theorem}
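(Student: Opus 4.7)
The plan is to construct $X_A$ as a non-separable Banach space with the strong rigidity property that every bounded linear operator on $X_A$ has the form $T = \lambda I + S$ for some scalar $\lambda \in \mathbb{C}$ and some $S$ whose range lies in a proper closed separable subspace $Y = Y_T \subsetneq X_A$. Such a space is available from the literature: the almost-disjoint-family constructions used in \cite{CosMa2} to prove the analogous non-existence theorem for $J$-class operators provide exactly this structure. Given such a $T = \lambda I + S$, since $T^n - \lambda^n I$ is a polynomial in $S$ with no constant term, one has $T^n x - \lambda^n x \in Y$ for every $x$ and every $n$; equivalently, the operator induced on the quotient $\pi : X_A \to X_A/Y$ is just scalar multiplication by $\lambda$. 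Because $X_A$ is non-separable and $Y$ is separable, $X_A/Y$ is non-separable (hence infinite-dimensional), and $Y$ has empty interior in $X_A$.

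Suppose, towards a contradiction, that $C \subset D(x, T, d)$ for some non-zero $x \in X_A$, some $d > 0$, and some open cone $C$. Since $Y$ has empty interior, $C \not\subset Y$, so $\pi(C)$ is a non-empty open cone in $X_A/Y$; applying $\pi$ to the inclusion gives $\pi(C) \subset D(\pi(x), \lambda I, d)$. I then split into cases. When $|\lambda| \leq 1$, every element of $D(\pi(x), \lambda I, d)$ has norm at most $\|\pi(x)\| + d$, which contradicts the unboundedness of the open cone $\pi(C)$. When $|\lambda| > 1$ and $\pi(x) \neq 0$, a direct calculation shows that $J(\pi(x), \lambda I, d) = \emptyset$ (the quantity $|\lambda|^{k_n}\|\tilde x_n\|$ would have to remain bounded while $\|\tilde x_n\| \to \|\pi(x)\| > 0$), so $\pi(C) \subset \bigcup_n B(\lambda^n\pi(x), d)$; rescaling any element of $\pi(C)$ by $t \to \infty$ forces it into the one-dimensional complex subspace $\mathbb{C}\pi(x)$, which has empty interior in the infinite-dimensional quotient.

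The main obstacle is the remaining case $|\lambda| > 1$ with $\pi(x) = 0$, i.e.\ $x \in Y \setminus \{0\}$, because here $D(0, \lambda I, d) = X_A/Y$ and the quotient argument becomes vacuous. To handle this I would work directly in $X_A$. Since $O(x, T) \subset Y$, every element of $O(x, T, d)$ satisfies $\|\pi(c)\| < d$, and it suffices to prove the same bound for $c \in J(x, T, d)$. The defining inequality $\|T^{k_n} x_n - c\| < d$ rearranges to $T^{k_n}(x_n - x) = c - T^{k_n} x + e_n$ with $\|e_n\| < d$ and $x_n - x \to 0$; invoking the additional spectral structure guaranteed by the construction of $X_A$ (for example, compactness of $S$ together with appropriate control on $\sigma(T)$ ensuring $\|T^{-k_n}\| \to 0$ whenever $|\lambda| > 1$) one inverts the equation to conclude $x_n = T^{-k_n}(c + e_n) \to 0$, contradicting $x_n \to x \neq 0$. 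Hence $J(x, T, d) \subset \{c : \|\pi(c)\| \leq d\}$, forcing $\pi(C) \subset \overline{B(0, d)}$ to be bounded, and this once again contradicts the unboundedness of the open cone $\pi(C)$, completing the proof.
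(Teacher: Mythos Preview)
Your quotient strategy is an attractive alternative to the paper's line of argument, and it handles the cases $|\lambda|\le 1$ and $|\lambda|>1,\ \pi(x)\neq 0$ cleanly; in fact your treatment of $|\lambda|\le 1$ is more direct than the paper's Proposition~\ref{existprop2}, which establishes a separability contradiction via a covering argument rather than a simple boundedness obstruction in the quotient.

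The genuine gap is the case $|\lambda|>1$ with $x\in Y$. Your argument there rests on the assertion that the construction of $X_A$ guarantees ``appropriate control on $\sigma(T)$ ensuring $\|T^{-k_n}\|\to 0$ whenever $|\lambda|>1$''. This is not available. Strictly singular $S$ only gives that $\sigma(S)$ is countable with $0$ as its unique accumulation point; nothing prevents $\sigma(S)$ from containing a point $\mu$ with $|\lambda+\mu|\le 1$, so $\sigma(T)=\lambda+\sigma(S)$ may well meet the closed unit disc, and $T$ need not even be invertible. Worse, the very hypothesis you are trying to contradict forces this: Proposition~\ref{existprop1}(iii) shows that if $D(x,T,d)$ contains an open cone for some non-zero $x$, then $\sigma(T)\cap\partial\mathbb{D}\neq\emptyset$, so the spectral radius of $T^{-1}$ (when defined) is at least $1$ and $\|T^{-n}\|\not\to 0$. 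Thus the inversion step cannot be carried out, and the claimed inclusion $J(x,T,d)\subset\{c:\|\pi(c)\|\le d\}$ is unproved (and, in light of $J^{mix}(0,\lambda I)=X_A/Y$, there is no reason to expect it in general).

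The paper closes this case differently: it exploits strict singularity (not merely separable range) to observe that $\sigma(T)\cap\overline{\mathbb{D}}$ is finite, applies the Riesz decomposition $X_A=X_1\oplus X_2$ with $\sigma(T_1)\subset\overline{\mathbb{D}}$, shows $X_1$ is finite-dimensional (otherwise $S|_{X_1}$ would still be strictly singular with $0\in\sigma(S|_{X_1})$, forcing $\lambda\in\sigma(T_1)$), and then projects the open cone into $X_1$ to invoke a finite-dimensional result. If you want to rescue the quotient approach, you would need an analogous spectral splitting before passing to the quotient; the bare hypothesis $|\lambda|>1$ is not enough.

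A minor point: the space you need is the Argyros--Arvanitakis--Tolias space \cite{AAT} (used by Nasseri \cite{Amir}), not the construction in \cite{CosMa2}.
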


In \cite{Amir} A. Bahman Nasseri gave a negative answer to a question, we asked together with G. Costakis in \cite{CosMa1}, whether every non-separable Banach space
supports a $J$-class operator. He used the complexification of a \textit{non-separable} Banach space $X_A$ constructed by S.A. Argyros, A.D. Arvanitakis and A.G. Tolias
in \cite{AAT} which has the property that every bounded linear operator on $X_A$ is of the form $T=z I+S$, where $S:X_A\to X_A$ is a strictly singular operator on $X_A$
with \textit{separable range}, $I:X_A\to X_A$ denotes the identity map on $X_A$ and $z\in\mathbb{C}$. \textit{We claim that such an operator can not be coarsely
$D$-class on an open  cone} (hence it can not be a coarsely $J$-class operator on an open cone). Of course one may think to use the famous Argyros-Haydon space
\cite{ArHa} in which all bounded linear operators are of the form $T=z I+K$ where $K$ is a compact operator, but this space is separable and by a theorem proved
independently by Ansari \cite{Ansa} and Bernal \cite{Bernal} such a space supports a mixing, hence hypercyclic operator. Before we proceed with the proof of our claim
let us say a few words about strictly singular operators. A \textit{strictly singular} operator $S:X\to Y$ between two Banach spaces $X$ and $Y$ is a bounded linear
operator such that there is no infinite dimensional closed subspace $Z$ of $X$ such that $S:Z\to S(Z)$, the restriction of $S$ to $Z$, is an isomorphism. For example,
every compact operator is strictly singular. If $X=Y$ then the spectrum of $S$ is countable and 0 is the only possible accumulation point (for more information about
strictly singular operators see e.g. \cite{AbAli}).

The proof of our claim is given in several steps.

\begin{proposition} \label{existprop1}
Let $T:X\to X$ be a linear operator acting on a Banach space $X$. Then the following hold:
\begin{enumerate}
\item[(i)] If the spectrum  $\sigma(T)$ of $T$ is contained in the open unit disk $\mathbb{D}\subset\mathbb{C}$ then $B(0,d)\subset J(x,T,d) \subset
\overline{B(0,d)}$ for every $x\in X$ and $d>0$.

\item[(ii)] If $\sigma(T)\subset \mathbb{C}\setminus\overline{\mathbb{D}}$ then $J(x,T,d)=\emptyset$ for every non-zero vector $x\in X$ and $d>0$. Moreover,
$J(0,T,d)=J^{mix}(0,T)=X$.

\item[(iii)] If $D(x,T,d)$ contains an open cone for some non-zero vector $x\in X$ and $d>0$. Then $\sigma(T)\cap \partial \mathbb{D}\neq\emptyset$.
\end{enumerate}
\end{proposition}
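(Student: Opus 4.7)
The plan is spectral-theoretic throughout. For (i), $\sigma(T)\subset\mathbb{D}$ gives $\|T^n\|\to 0$. To show $B(0,d)\subset J(x,T,d)$ I take the constant sequence $x_n=x$ and $k_n=n$: since $T^nx\to 0$, any $y$ with $\|y\|<d$ satisfies $\|T^nx-y\|\to\|y\|<d$. For $J(x,T,d)\subset\overline{B(0,d)}$, witnesses $x_n\to x$ and $k_n\to\infty$ of $y\in J(x,T,d)$ satisfy $\|T^{k_n}x_n\|\leq\|T^{k_n}\|\cdot\|x_n\|\to 0$, so the triangle inequality forces $\|y\|\leq d$.

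For (ii), $0\notin\sigma(T)$ so $T$ is invertible and $\sigma(T^{-1})\subset\mathbb{D}$ by spectral mapping, giving $\|T^{-n}\|\to 0$. For each $y\in X$ the choice $x_n:=T^{-n}y\to 0$ gives $T^nx_n=y$, witnessing simultaneously $y\in J^{mix}(0,T)$ and $y\in J(0,T,d)$, so both equal $X$. For $J(x,T,d)=\emptyset$ with $x\neq 0$, any alleged $y$ forces $\|T^{k_n}x_n\|<d+\|y\|$ bounded, whence $\|x_n\|\leq\|T^{-k_n}\|(d+\|y\|)\to 0$, contradicting $x_n\to x\neq 0$.

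For (iii) I argue by contrapositive. Assume $\sigma(T)\cap\partial\mathbb{D}=\emptyset$, so the Riesz functional calculus yields $X=X_1\oplus X_2$ with bounded projections $P_i$ and $\sigma(T|_{X_1})\subset\mathbb{D}$, $\sigma(T|_{X_2})\subset\mathbb{C}\setminus\overline{\mathbb{D}}$; write $x=x^1+x^2$. If $x^2\neq 0$, the argument of (ii) applied through $P_2$ gives $J(x,T,d)=\emptyset$, so $D(x,T,d)=O(x,T,d)$, and $\|T^nx\|\geq\|T_2^nx^2\|/\|P_2\|\to\infty$. For $v\neq 0$ and $r>0$ with $B(v,r)\subset C$, the set $S_\lambda:=\{n:T^nx\in B(\lambda v,\lambda r+d)\}$ is finite, yet $B(\lambda v,\lambda r)\subset\bigcup_{n\in S_\lambda}B(T^nx,d)$; by Riesz's lemma in the infinite-dimensional $X$, the ball $B(\lambda v,\lambda r)$ contains arbitrarily many points at mutual distance exceeding $2d$ once $\lambda$ is large, each requiring a distinct $T^nx$ to stay within $d$, contradicting $|S_\lambda|<\infty$. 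If instead $x^2=0$, then $x\in X_1\setminus\{0\}$ and $X_1\neq\{0\}$; applying the argument of (i) componentwise gives $\|P_1y\|\leq\|P_1\|\,d$ on $J(x,T,d)$ (from $T_1^{k_n}x_n^1\to 0$) and $O(x,T)\subset X_1$ bounded, so $P_1$ is uniformly bounded on $D(x,T,d)$. Then $\lambda v\in D(x,T,d)$ for all $\lambda>0$ and $v\in C$ forces $P_1v=0$, whence $C\subset\ker P_1=X_2$, a proper closed subspace with empty interior, contradicting the openness of $C$.

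The main obstacle is the purely expanding sub-case in (iii), where $J(x,T,d)$ is empty but the orbit is unbounded; the argument genuinely uses infinite-dimensionality via Riesz's lemma, and the conclusion fails in finite dimensions (for example $2I$ on $\mathbb{C}^n$ has $D(x,T,d)=O(x,T,d)$ coarsely dense on many cones). Everything else reduces to routine bookkeeping with the spectral projections once the decomposition is in place.
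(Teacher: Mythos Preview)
Your arguments for (i) and (ii) are essentially identical to the paper's.

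For (iii) the overall architecture is the same --- assume $\sigma(T)\cap\partial\mathbb{D}=\emptyset$, take the Riesz decomposition $X=X_1\oplus X_2$, and use the boundedness coming from (i) on the contracting part --- but the two proofs diverge at the purely expanding endgame. The paper first shows (uniformly, without splitting on $x^2$) that the $X_1$-component of every $c\in C$ must vanish, hence $C\subset X_2$ and therefore $X_2=X$; then (ii) gives $J(x,T,d)=\emptyset$, so $C\subset O(x,T,d)$, and the paper finishes by invoking Theorem~\ref{th21} (a coarsely dense orbit on an open cone forces hypercyclicity), together with the standard fact that a hypercyclic operator must have spectrum meeting $\partial\mathbb{D}$. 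Your route instead handles the case $x^2\neq 0$ by a direct packing argument: $\|T^nx\|\to\infty$ makes each $S_\lambda$ finite, while Riesz's lemma produces more than $|S_\lambda|$ points in $B(\lambda v,\lambda r)$ at mutual distance $>2d$, so they cannot all lie within $d$ of the finitely many available orbit points. This is more elementary and self-contained, since it avoids the appeal to Theorem~\ref{th21} (and through it to the Bourdon--Feldman machinery); the price is the explicit case split and the explicit use of infinite-dimensionality. The paper's proof also tacitly needs $\dim X=\infty$, since Theorem~\ref{th21} is stated only in that setting.

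One small correction to your closing remark: the example $T=2I$ on $\mathbb{C}^n$ does \emph{not} have $D(x,T,d)=O(x,T,d)$ coarsely dense on any open cone. The orbit points $2^nx$ have consecutive gaps $\|2^{n+1}x-2^nx\|=2^n\|x\|\to\infty$, so for large $n$ the balls $B(2^nx,d)$ are isolated and cannot cover any ray, let alone a cone. Your observation that the Riesz-lemma step genuinely requires $\dim X=\infty$ is correct, but this particular example does not witness failure of (iii) in finite dimensions.
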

\begin{proof}
(i) Assume that $\sigma(T)\subset\mathbb{D}$. Then by the spectral radius formula of Gelfand $\| T^n\|\to 0$. Let $y\in J(x,T,d)$ for some $x\in X$, $d>0$. Hence, there
exist a strictly increasing sequence of positive integers $\{ k_n\}$ and a sequence $x_n\to x$ such that $\| T^{k_n}x_n-y\|<d$ for every $n\in\mathbb{N}$. Since $\|
T^n\|\to 0$ and $x_n\to x$ then $T^{k_n}x_n\to 0$, thus $y\in \overline{B(0,d)}$ and so $J(x,T,d) \subset \overline{B(0,d)}$. Let now $y\in B(0,d)$. Since $T^nx\to 0$
then $\|T^nx-y\|\to \|y\|<d$, hence $y\in J(x,T,d)$.

(ii) If $\sigma(T)\subset \mathbb{C}\setminus\overline{\mathbb{D}}$ then $T$ is invertible and $\| T^{-n}\|\to 0$. We argue by contradiction. Assume that there exists a
vector $y\in J(x,T,d)$ for some non-zero vector $x\in X$ and some $d>0$. Thus, there exist a strictly increasing sequence of positive integers $\{ k_n\}$ and a sequence
$x_n\to x$ such that $\| T^{k_n}x_n-y\|<d$ for every $n\in\mathbb{N}$. Hence, $\| x_n-T^{-k_n}y\| \leq \| T^{-k_n}\|\cdot \|T^{k_n}x_n-y\|< \| T^{-k_n}\|\cdot d$. Since
$\| T^{-k_n}\|\to 0$ then $T^{-k_n}y\to 0$ and  since $\| x_n-T^{-k_n}y\| < \| T^{-k_n}\|\cdot d$ then $x_n\to 0$. Thus, $x=0$ which is a contradiction. Let us now show
that $J(0,T,d)=J^{mix}(0,T)=X$. Since $J^{mix}(0,T)\subset J(0,T,d)$ it is enough to show that $J^{mix}(0,T)=X$. This follows easily by noticing that $x=T^n(T^{-n}x)=x$
while $T^{-n}x\to 0$ for every $x\in X$.

(iii) We argue by contradiction. Assume that $D(x,T,d)$ contains an open  cone $C\subset X$ for some non-zero vector $x\in X$ and $d>0$ and that $\sigma(T)\cap \partial
\mathbb{D}=\emptyset$. Thus we may apply the Riesz Decomposition theorem and decompose $X$ into two closed $T$-invariant subspaces $X_1$ and $X_2$ of $X$ such that
$X=X_1\oplus X_2$, $\sigma (T_1)=\{ \lambda\in\sigma (T)\,|\, |\lambda |<1\}$ and $\sigma (T_2)=\{ \lambda\in\sigma (T)\,|\, |\lambda |>1\}$ where $T_1$, $T_2$ denote
the restriction of $T$ on $X_1$ and $X_2$ respectively. Let $x=x_1+x_2$ with $x_1\in X_1$ and $x_2\in X_2$. It is easy to see that $D(x,T,d)\subset D(x_1,T_1,d) +
D(x_2,T_2,d)$. Note that, by item (i), the set $J(x_1,T_1,d)$ is bounded and since  $\| T_1^n\|\to 0$, the coarse orbit $O(x_1,T_1,d)$ is also bounded. Take a vector
$c=c_1+c_2\in C$ where $c_1\in X_1$ and $c_2\in X_2$. Since $C\subset D(x,T,d)$ and $D(x_1,T_1,d)=O(x_1,T_1,d)\cup J(x_1,T_1,d)$ is bounded, then for large $\lambda >0$,
$\lambda c_1=0$ hence $c_1=0$. Thus, $C\subset D(x_2,T_2,d)\subset X_2$, therefore, $X_2=X$ and so $T_2=T$. In that case, by item (ii), $J(x,T,d)=\emptyset$, hence
$C\subset O(x,T,d)$. Then,  by Theorem \ref{th21}, the operator $T$ is hypercyclic. Since $\sigma(T)\subset \mathbb{C}\setminus\overline{\mathbb{D}}$ this leads to a
contradiction (see e.g. \cite[Proposition 5.3]{GEPe}).
\end{proof}

\begin{proposition} \label{existprop2}
let $X$ be a non-separable Banach space and let $T:X\to X$ be a linear operator of the form $T=U+S$, where $U:X\to X$ is power bounded and $S:X\to X$ is an operator with
separable range. Then $T$ can not be a coarsely $D$-class operator on an open cone of $X$.
\end{proposition}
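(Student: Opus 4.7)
The plan is to build a closed separable subspace $Y\subset X$ that absorbs every difference $T^nx-U^nx$, and then combine power boundedness of $U$ with the scaling invariance of an open cone to trap $C$ inside $Y$, contradicting the non-separability of $X$.

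First I would set $M:=\sup_{n\geq 0}\|U^n\|<\infty$ and define
\[ Y := \overline{\operatorname{span}}\,\bigcup_{k\geq 0} U^k(S(X)). \]
Since $S(X)$ is separable by hypothesis and each $U^k$ is a bounded linear map, each $U^k(S(X))$ is separable, so $Y$ is a closed separable subspace of $X$. Next I would expand $T^n=(U+S)^n$ as the sum of the $2^n$ words of length $n$ over $\{U,S\}$; every word other than $U^n$ contains at least one occurrence of $S$, and letting $j$ be the leftmost such index, the word factors as $U^{j-1}\circ S\circ(\text{a product of $U$'s and $S$'s})$, whose range lies in $U^{j-1}(S(X))\subset Y$. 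Writing $T^n=U^n+R_n$, this yields $R_n(X)\subset Y$ for every $n\geq 0$.

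For the main argument I would assume, toward a contradiction, that $T$ is coarsely $D$-class on an open cone, so there are a nonzero $x\in X$, a constant $d>0$, and an open cone $C\subset D(x,T,d)=O(x,T,d)\cup J(x,T,d)$. For $y\in O(x,T,d)$, picking $n$ with $\|T^nx-y\|<d$ gives, using $R_nx\in Y$,
\[ d(y,Y)\leq \|y-R_nx\|\leq \|y-T^nx\|+\|U^nx\|<d+M\|x\|. \]
For $y\in J(x,T,d)$, picking $x_n\to x$ and $k_n\to\infty$ with $\|T^{k_n}x_n-y\|<d$ gives, for $n$ large with $\|x_n\|\leq \|x\|+1$,
\[ d(y,Y)\leq \|y-R_{k_n}x_n\|<d+M(\|x\|+1). \]
Setting $K:=d+M(\|x\|+1)$, one concludes that $d(y,Y)\leq K$ for every $y\in C$.

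The final step is a homogeneity argument that activates the cone hypothesis. For any $y\in C$ and any $\lambda>0$ the cone property gives $\lambda y\in C$, and since $Y$ is a linear subspace $d(\lambda y,Y)=\lambda\,d(y,Y)\leq K$; letting $\lambda$ be arbitrarily large forces $d(y,Y)=0$, so $C\subset\overline{Y}=Y$. Since $C$ is a nonempty open subset of $X$, the closed subspace $Y$ would have nonempty interior and hence equal $X$; but $Y$ is separable while $X$ is not, contradiction. The main obstacle is the $J$-set estimate, where one must bound $\|U^{k_n}x_n\|$ uniformly by transferring the power boundedness of $U$ from the limit $x$ to the approximating sequence $x_n$; the scaling step, where the open cone hypothesis actually does its work, is essentially automatic once that distance bound is in place.
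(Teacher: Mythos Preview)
Your proof is correct and shares the paper's skeleton: write $T^n=U^n+R_n$ with each $R_n$ having separable range, use power boundedness of $U$ to control the $U^n$-contribution, and then exploit the cone's scaling invariance to eliminate the bounded error. The difference lies in the endgame. The paper keeps the separable set $O(x,U)+\bigcup_n S_{k_n}(X)$ as a set (not a subspace), covers it by countably many $\varepsilon$-balls $B(w_n,\varepsilon)$, observes that $C\subset\bigcup_n B(w_n,d+2\varepsilon)$, and then argues that the rescaled family $\{B(w_n/k,(d+2\varepsilon)/k)\}_{n,k}$ is a countable base for $C$; finally it invokes connectedness of $X$ to propagate separability from $C$ to all of $X$. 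You instead pass to the closed separable linear subspace $Y$, obtain a uniform bound $d(y,Y)\le K$ on $C$, and use the homogeneity identity $d(\lambda y,Y)=\lambda\,d(y,Y)$ to force $C\subset Y$ directly, whence $Y=X$. Your route is shorter and more transparent, trading the paper's topological base--plus--connectedness argument for a one-line linear-algebra observation; the paper's version has the slight conceptual advantage that it never needs the absorbing separable object to be a subspace, but that generality is not used anywhere.
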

\begin{proof}
Note that the multiplication of an operator on the left or on the right by an operator with separable range gives an operator with separable range too. Hence, for every
$n\in\mathbb{N}$, the operator $T^n$ can be written as a sum of the form $T^n=U^n +S_n$ where $S_n:X\to X$ is a linear operator with separable range. We argue by
contradiction. Assume that there exists an open cone $C$ of $X$ such that $C\subset D(x,T,d)$ for some $x\in X$, $d>0$ and let $y\in C$. Thus, there exist an increasing
sequence of non-negative integers $\{ k_n\}$ and a sequence $x_n\to x$ such that $\| T^{k_n}x_n-y\|<d$ for every $n\in\mathbb{N}$. Since $T^{k_n}=U^{k_n} +S_{k_n}$ for
every $n\in\mathbb{N}$ then $T^{k_n}x_n=U^{k_n}x_n - U^{k_n}x+ U^{k_n}x + S_{k_n}x_n$. Since $U$ is power bounded and $x_n\to x$ we get that $U^{k_n}x_n - U^{k_n}x\to
0$. So if we fix some $\varepsilon>0$ then $T^{k_n}x_n\in B(0,\varepsilon)+ O(x,U)+ \bigcup_{n\in\mathbb{N}} S_{k_n}(X)$. The set $O(x,U)+ \bigcup_{n\in\mathbb{N}}
S_{k_n}(X)$ has a countable dense subset hence it can be covered by a countable family $\{ B(w_n,\varepsilon )\}_{n\in\mathbb{N}}$  of open balls of radius $\varepsilon
>0$. Hence, $T^{k_n}x_n\in B(0,\varepsilon)+ O(x,U)+ \bigcup_{n\in\mathbb{N}} S_{k_n}(X)\subset \bigcup_{n\in\mathbb{N}} B(w_n,2\varepsilon )$. Note that $y\in B(T^{k_n}x_n,d)$ for
every $n\in\mathbb{N}$. Therefore, $C\subset D(x,T,d)\subset \bigcup_{n\in\mathbb{N}} B(w_n,d+2\varepsilon )$. We will show that the open  cone $C$ has a countable base
hence it is separable. In this case every open ball of $X$ is separable and since $X$ is connected then $X$ is separable which is a contradiction. Let us now show that
the family $\{ B(\frac{w_n}{k},\frac{d+2\varepsilon}{k} )\}_{n,k\in\mathbb{N}}$ form a (countable) base for $C$. To see that, take an open ball $B(v,R)$ centered at a
vector $v\in C$ with radius $R>0$ and a positive integer $k$ such that $\frac{d+2\varepsilon}{k}<\frac{R}{2}$. Since $kv\in C \subset \bigcup_{n\in\mathbb{N}}
B(w_n,d+2\varepsilon )$ there exists a positive integer $n\in\mathbb{N}$ such that $kv\in B(w_n,d+2\varepsilon )$. Thus, $v\in B(\frac{w_n}{k},\frac{d+2\varepsilon}{k}
)$ and since $\frac{d+2\varepsilon}{k}<\frac{R}{2}$ then $ B(\frac{w_n}{k},\frac{d+2\varepsilon}{k} )\subset B(v,R)$ and the proof is finished.
\end{proof}

\begin{proof}[Proof of Theorem \ref{existmain}]
Recall that a linear operator $T:X_A\to X_A$ on $X_A$ is of the form $T=z I+S$, where $S:X_A\to X_A$ is a strictly singular operator on $X_A$ with separable range,
$I:X_A\to X_A$ denotes the identity map on $X_A$ and $z\in\mathbb{C}$. We argue by contradiction. Assume that there exists an open cone $C$ of $X$ such that $C\subset
D(x,T,d)$ for some non-zero vector $x\in X_A$ and $d>0$. We consider two cases depending on the modulus of $z\in\mathbb{C}$.

The case $|z|\leq 1$ follows directly from Proposition \ref{existprop2}, since $S$ has dense range.

Let $|z|>1$. Since $z$ is the only possible accumulation point of the spectrum $\sigma (T)$ of $T$ and $|z|>1$ then we can write $\sigma (T)$ as a disjoint union of the
closed sets $\sigma_1=\{ \lambda\in\sigma (T)\,|\, |\lambda |\leq 1\}$ and $\sigma_2=\{ \lambda\in\sigma (T)\,|\, |\lambda |> 1\}$ and then apply the Riesz Decomposition
theorem. Therefore, there exist two closed $T$-invariant subspaces $X_1$ and $X_2$ of $X_A$ such that $X_A=X_1\oplus X_2$ and if $T_1$, $T_2$ denote the restriction of
$T$ on $X_1$ and $X_2$, respectively, then $\sigma (T_1)=\sigma_1$ and $\sigma (T_2)=\sigma_2$. Note that by Proposition \ref{existprop1} (iii), $\sigma(T)\cap \partial
\mathbb{D}\neq\emptyset$, hence $X_1\neq \{ 0\}$. We claim that $X_1$ is a finite dimensional vector space. If not then the restriction $S:X_1\to S(X_1)$ of the strictly
singular operator $S$ on $X_1$ is again strictly singular, hence it can not be a linear isomorphism. Thus, $0\in\sigma(S|X_1)$ therefore
$z\in\sigma(T_1)=z+\sigma(S|X_1)$, a contradiction. Thus, $X_1$ is finite dimensional. Write the vector $x=x_1+x_2$ with $x_1\in X_1$ and $x_2\in X_2$ and note that
$C\subset D(x,T,d)\subset D(x_1,T_1,d) + D(x_2,T_2,d)$. The projection of the open cone $C$ on $X_1\neq \{ 0\}$ is again an open cone in $X_1$ which is contained in
$D(x_1,T_1,d)$. This implies, by \cite{CosMa3},  that $x_1=0$ and $\sigma_1\subset \mathbb{C}\setminus\overline{\mathbb{D}}$ which is a contradiction since
$\sigma_1\subset\overline{\mathbb{D}}$.

\end{proof}

\section{Final remarks and open problems}

\subsection{}
One of the most important theorems in the theory of hypercyclic operators is the Bourdon-Feldman Theorem \cite{BF}. Bourdon-Feldman theorem says that whenever an orbit
$O(x,T)$ is somewhere dense in $X$ then it is dense in $X$. Since every coarse orbit $O(x,T,d)$ has always non-empty interior, and in view of our main result Theorem
\ref{main}, it is natural to ask the following question.

\medskip

\noindent\textbf{Question 1.}\,\, If a coarse orbit $O(x,T,d)$ contains an open  cone $C\subset X$ is it true that $O(x,T,d)=X$?

\bigskip

A question related to the previous one is the following.

\medskip

\noindent\textbf{Question 2.}\,\, Obviously a coarsely $J$-class operator is coarsely $D$-class. Is the inverse implication true?

\subsection{}
Let $T:X\to X$ be a linear operator acting on a Banach space $X$. If $X$ is finite dimensional then  $\bigcup_{y\in J(x,T)}B(y,d) = J(x,T,d)$. If $X$ is infinite
dimensional this is not longer true. In Proposition \ref{pr32} we showed that there exists a backward bilateral weighted shift on $l^{\infty}(\mathbb{Z})$ which is
coarsely $J$-class and not $J$-class and we found a vector $y\in l^{\infty}(\mathbb{Z})$ such that $J(y,T,2)=X$ and $J(y,T)=\emptyset$, see Remark \ref{rem32}. Hence it
is natural to ask the following question.

\medskip

\noindent\textbf{Question 3.}\,\, Let $T:X\to X$ be a linear operator acting on an infinite dimensional Banach space $X$. Assume that $\bigcup_{y\in J(x,T)}B(y,d)=X$. Is
then true that $J(x,T)=X$?

\end{document}